\newtheorem{theorem}{Theorem}[section]
\newtheorem{lemma}[theorem]{Lemma}
\newtheorem{proposition}[theorem]{Proposition}
\newtheorem{corollary}[theorem]{Corollary}
\newtheorem{question}[theorem]{Question}
\theoremstyle{definition}
\def\oneconst{c}
\begin{document}

\title[Noether bound for invariants in  relatively free algebras]
{Noether bound for invariants\\
in relatively free algebras}

\author[M\'aty\'as Domokos and Vesselin Drensky]
{M\'aty\'as Domokos and Vesselin Drensky}
\address{MTA Alfr\'ed R\'enyi Institute of Mathematics,
Re\'altanoda utca 13-15, 1053 Budapest, Hungary}
\email{domokos.matyas@renyi.mta.hu}
\address{Institute of Mathematics and Informatics,
Bulgarian Academy of Sciences,
Acad. G. Bonchev Str., Block 8,
1113 Sofia, Bulgaria}
\email{drensky@math.bas.bg}

\thanks{This project was carried out in the framework of the exchange program
between the Hungarian and Bulgarian Academies of Sciences.
It was partially supported by Grant K101515 of the Hungarian  National Foundation for Scientific Research (OTKA)
and by Grant I02/18 of the Bulgarian National Science Fund.}

\subjclass[2010]{16R10; 13A50; 15A72; 16W22.}

\keywords{relatively free associative algebras, invariant theory of finite groups, noncommutative invariant theory,
Noether bound.}

\begin{abstract}
Let $\mathfrak R$ be a weakly noetherian  variety of unitary associative algebras (over a field $K$ of characteristic 0),
i.e., every finitely generated algebra from $\mathfrak{R}$ satisfies the ascending chain condition for two-sided ideals.
For a finite group $G$ and a $d$-dimensional $G$-module $V$ denote  by $F({\mathfrak R},V)$
the relatively free algebra in $\mathfrak{R}$ of rank $d$ freely generated by
the  vector space $V$. It is proved that  the subalgebra $F({\mathfrak R},V)^G$ of $G$-invariants is generated by elements of degree at most
$b(\mathfrak{R},G)$ for some explicitly given number $b(\mathfrak{R},G)$ depending only on the variety $\mathfrak{R}$ and the group $G$ (but not on $V$).
This generalizes the classical result of Emmy Noether
stating that the algebra of commutative polynomial invariants $K[V]^G$
is generated by invariants of degree at most $\vert G\vert$.
\end{abstract}

\maketitle

\section{Introduction}

We fix a base field $K$ of characteristic 0. Throughout the paper  $V=V_d$ denotes a $K$-vector space  of dimension $d\geq 2$
with basis $X_d=\{x_1,\ldots,x_d\}$. We consider the polynomial algebra $K[V]$
and the free unitary associative algebra
$K\langle V\rangle=K\langle x_1,\ldots,x_d\rangle$ freely generated by $V$ over $K$.
The canonical action of the general linear group $GL(V)$
on $V$ is extended diagonally
on $K[V]$ and $K\langle V\rangle$ by the rule
\[
g(x_{j_1}\cdots x_{j_n})=g(x_{j_1})\cdots g(x_{j_n}),
\]
where $g\in GL(V)$ and the monomials $x_{j_1}\cdots x_{j_n}$ belong to $K[V]$ or to $K\langle V\rangle$.
So $K\langle V\rangle $ is the tensor algebra of $V$, whereas $K[V]$ is the symmetric tensor algebra of $V$.
Note that in commutative invariant theory $K[V]$ is usually identified with the algebra of polynomial functions on the dual space of $V$.
One of the ways to develop noncommutative invariant theory is to study
invariants of subgroups of $GL(V)$ acting on factor algebras $K\langle V\rangle/I$, where the ideal $I$ of $K\langle V\rangle$ is stable under
the action of $GL(V)$. The most attractive ideals for this purpose are the T-ideals, i.e., the ideals invariant under all endomorphisms
of $K\langle V\rangle$. Every T-ideal coincides with the ideal $\text{Id}(R,V)$ of the polynomial identities in $d$ variables
of a unitary algebra $R$. Then $K\langle V\rangle/\text{Id}(R,V)$ is the relatively free algebra $F({\mathfrak R},V)$ of rank $d$ in the variety
${\mathfrak R}=\text{var}(R)$ of unitary algebras generated by $R$.
Note that $\text{Id}(R,V)$ is necessarily contained in the commutator ideal of $K\langle V\rangle$, hence we have the natural surjections
\begin{equation}\label{eq:surjections}
K\langle V\rangle \twoheadrightarrow F({\mathfrak R},V)\twoheadrightarrow K[V] .
\end{equation}
In the sequel we assume that $R$ is a PI-algebra, i.e., $\text{Id}(R,V)\not=0$.
For a background on PI-algebras and varieties of algebras, see e.g., \cite{DF} or \cite{GZ}, and on noncommutative invariant theory
the surveys \cite{F} and \cite{D3}.
Recall that a polynomial $f(V)=f(X_d)\in K\langle V\rangle$ is a polynomial identity for the algebra $R$
if $f(r_1,\ldots,r_d)=0$ for all $r_1,\ldots,r_d\in R$. The class $\mathfrak R$ of all algebras satisfying a given system
of polynomial identities is called a {\it variety}. The {\it variety $\mathfrak R$ is generated by} $R$
if $\mathfrak R$ has the same polynomial identities as $R$. Then we write $\text{Id}({\mathfrak R},V)=\text{Id}(R,V)$.
The action of the group $GL(V)$ on $K\langle V\rangle$ induces
an action on the relatively free algebra $F({\mathfrak R},V)$, and the surjections \eqref{eq:surjections} are $GL(V)$-equivariant.

Now let $G$ be a finite group. We say that $V$ is a {\it $G$-module}  if we are given a representation (i.e., a group homomorphism) $\rho:G\to GL(V)$.
We shall suppress $\rho$ from the notation, and write $gv:=(\rho(g))(v)$ for $g\in G$, $v\in V$, and similarly $gf:=(\rho(g))(f)$ for $f\in F(\mathfrak{R},V)$.
Moreover, we shall study the {\it algebra of invariants}
\[
F({\mathfrak R},V)^G=\{f \in F({\mathfrak R},V)\mid gf=f \text{ for all } g\in G\}.
\]
Since the characteristic of $K$ is assumed to be zero, the group $G$ acts completely reducibly on $K\langle V\rangle$,
hence the $G$-equivariant $K$-algebra surjections
in \eqref{eq:surjections} restrict to $K$-algebra surjections
\begin{equation}\label{eq:restricted_surjections}
K\langle V\rangle^G \twoheadrightarrow F({\mathfrak R},V)^G\twoheadrightarrow K[V]^G .
\end{equation}
Our starting point is the following classical fact:
\begin{theorem}[Emmy Noether \cite{N}] \label{thm:noether} Let $G$ be a finite group and $V$ a $G$-module. Then the following holds:
\begin{itemize}
\item[(i)] The algebra $K[V]^G$ is finitely generated.
\item[(ii)] The algebra  $K[V]^G$ is generated by its elements of degree at most $|G|$.
\end{itemize}
\end{theorem}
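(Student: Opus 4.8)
The plan is to derive (i) from (ii) and to prove (ii) by the classical symmetric-function argument built on the Reynolds operator and Newton's identities. First I would introduce the averaging (Reynolds) operator
\[
R(f) = \frac{1}{|G|}\sum_{g\in G} g f, \qquad f \in K[V],
\]
which is available because $\mathrm{char}\,K = 0$. It is a $K$-linear, degree-preserving projection of $K[V]$ onto $K[V]^G$, and it is $K[V]^G$-linear in the sense that $R(hf) = h\,R(f)$ whenever $h$ is invariant. Consequently $K[V]^G$ is spanned as a vector space by the symmetrized monomials $R(x^\alpha) = R(x_1^{\alpha_1}\cdots x_d^{\alpha_d})$. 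Hence to prove (ii) it suffices to show that for $|\alpha| > |G|$ the invariant $R(x^\alpha)$ lies in the subalgebra generated by the invariants of degree at most $|G|$; and once (ii) is in hand, (i) follows immediately, since there are only finitely many monomials $x^\alpha$ with $|\alpha|\le|G|$, so finitely many generators $R(x^\alpha)$ suffice.

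For the degree bound I would use the standard polarization trick. I introduce new commuting indeterminates $z_1,\ldots,z_d$ and the generic linear form $\ell = z_1 x_1 + \cdots + z_d x_d$. Since $G$ permutes the forms $\{g\ell : g\in G\}$, the orbit product
\[
\prod_{g\in G}\bigl(t - g\ell\bigr) = \sum_{j=0}^{|G|} (-1)^j e_j\, t^{|G|-j}
\]
has coefficients $e_j$ that are $G$-invariant and bihomogeneous of degree $j$ in the $x$'s and in the $z$'s; extracting the coefficient of each monomial $z^\beta$ yields $G$-invariants in $K[V]$ of degree $j \le |G|$. Next I would invoke Newton's identities, valid because $\mathrm{char}\,K=0$, to express each power sum $p_k = \sum_{g\in G}(g\ell)^k$ with $k > |G|$ as a polynomial with rational coefficients in $e_1,\ldots,e_{|G|}$.

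Comparing the two descriptions of $p_k$ is the crux. On the one hand $p_k = |G|\,R(\ell^k)$, and the multinomial expansion gives $R(\ell^k) = \sum_{|\alpha|=k}\binom{k}{\alpha} z^\alpha R(x^\alpha)$, so the coefficient of $z^\alpha$ in $p_k$ is a nonzero scalar multiple of $R(x^\alpha)$. On the other hand, the coefficient of $z^\alpha$ in a polynomial in $e_1,\ldots,e_{|G|}$ is a polynomial in the degree-$\le|G|$ invariants produced above. Solving for $R(x^\alpha)$ then expresses it as a polynomial in invariants of degree at most $|G|$, and induction on $k=|\alpha|$ finishes (ii).

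The bookkeeping is routine; the hard part will be the clean isolation of a single symmetrized monomial $R(x^\alpha)$ from the power sum $p_k$. This is exactly what the generic form $\ell$ buys us: since $\alpha\mapsto z^\alpha$ is injective and $\mathrm{char}\,K=0$ makes the multinomial coefficients $\binom{k}{\alpha}$ invertible, each $R(x^\alpha)$ is singled out by one $z$-monomial and can be solved for. I would also take care to verify that the Newton decomposition of $p_k$ involves only $e_1,\ldots,e_{|G|}$ (the higher $e_j$ vanishing by definition of the orbit product), so that every factor appearing on the right genuinely has degree at most $|G|$.
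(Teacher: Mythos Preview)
The paper does not supply its own proof of this statement: Theorem~\ref{thm:noether} is quoted as ``the following classical fact'' and attributed to Noether~\cite{N}, with no argument given in the paper itself. So there is nothing in the paper to compare your proposal against at the level of proof strategy.

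That said, your proposal is correct and is essentially Noether's original 1916 argument. The key steps---Reynolds averaging, the auxiliary generic linear form $\ell=\sum z_ix_i$, the orbit polynomial $\prod_{g\in G}(t-g\ell)$ whose elementary symmetric coefficients $e_j$ have $z$-coefficients that are $G$-invariants of degree $j\le|G|$, Newton's identities reducing $p_k=\sum_g(g\ell)^k$ for $k>|G|$ to a polynomial in $e_1,\dots,e_{|G|}$, and the extraction of $R(x^\alpha)$ as the $z^\alpha$-coefficient of $p_k$ up to the invertible scalar $|G|\binom{k}{\alpha}$---are all sound in characteristic~$0$. Your final paragraph correctly identifies the one point worth spelling out: when you expand a monomial $e_{j_1}\cdots e_{j_m}$ and take the $z^\alpha$-coefficient, you obtain a sum of products of the $z^\beta$-coefficients of the individual $e_{j_i}$'s, each of which is a $G$-invariant of degree $j_i\le|G|$; so the right-hand side genuinely lies in the subalgebra generated in degrees at most $|G|$. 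With that bookkeeping made explicit, the argument is complete.
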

Of course, (ii) implies (i). We mention that (i) holds also in the modular case
(i.e., when the characteristic of the base field divides the group order), whereas (ii) does not.
In view of (ii) it  makes sense to introduce the numbers
\[
\beta(G,V)=\min\{m\mid K[V]^G \text{ is generated by invariants of degree }\leq m\}
\]
and
\[
\beta(G)={\max}_V\{\beta(G,V)\mid V\mbox{ is a }G\mbox{-module}\}.
\]
The latter is called the {\it Noether number of} $G$, and Theorem~\ref{thm:noether} (ii) says that $\beta(G)\leq\vert G\vert$.
The exact value of $\beta(G)$ is known in few cases only.
Barbara Schmid \cite{Sch} showed that $\beta(G)=\beta(G,V_{\text{reg}})$, where $V_{\text{reg}}$ is the regular $\vert G\vert$-dimensional $G$-module.
It is known that $\beta(G)=\vert G\vert$ for $G$ cyclic. Domokos and Heged\H{u}s \cite{DH} proved that if $G$ is not cyclic then
$\beta(G)\leq(3/4)\vert G\vert$ and this bound is exact because is reached for the Klein four-group and for the quaternion group of order 8.
Cziszter and Domokos \cite{CD2, CD3} showed that the only noncyclic groups $G$ with $\beta(G)\geq(1/2)\vert G\vert$
are the groups with a cyclic subgroup of index 2 and four more sporadic exceptions -- ${\mathbb Z}_3\times{\mathbb Z}_3$,
${\mathbb Z}_2\times{\mathbb Z}_2\times{\mathbb Z}_2$, the alternating group $A_4$, and the binary tetrahedral group
$\tilde{A}_4$. In particular, they proved that $\beta(G)-(1/2)\vert G\vert=1$ or 2 for groups $G$ with cyclic subgroup of index 2.
See also \cite{C} and \cite{CDG} for further information on $\beta(G)$.

Note that in the special case when $\mathfrak{R}$ is the variety of commutative algebras, we have $F(\mathfrak{R},V)=K[V]$ and
hence $F(\mathfrak{R},V)^G=K[V]^G$. Taking the point of view of universal algebra, we may fix a variety $\mathfrak{R}$
(larger than the variety of commutative algebras), and
look for possible analogues of Theorem~\ref{thm:noether}  for the variety $\mathfrak{R}$.
Kharchenko \cite{Kh} characterized the varieties $\mathfrak{R}$ for which $F(\mathfrak{R},V)^G$ is finitely generated for all finite groups $G$ and $G$-modules $V$.
More precisely, he showed that if $F(\mathfrak{R},V_2)^G$ is finitely generated for all finite subgroups $G$ of $GL(V_2)$,
then the variety $\mathfrak{R}$ is {\it weakly noetherian}
(i.e., every finitely generated algebra from $\mathfrak{R}$ satisfies the ascending chain condition for two-sided ideals),
and conversely, if $\mathfrak{R}$ is weakly noetherian, then $F(\mathfrak{R},V)^G$ is finitely generated for all finite $G$ and $G$-module $V$.
By analogy with the definition of $\beta(G,V)$ and $\beta(G)$, given a weakly noetherian variety $\mathfrak{R}$ we introduce the numbers
\[
\beta(G,{\mathfrak R},V)=\min\{m\mid F({\mathfrak R},V)^G \text{ is generated by invariants of degree }\leq m\},
\]
\[
\beta(G,{\mathfrak R})={\sup}_V\{\beta(G,{\mathfrak R},V)\mid V\mbox{ is a }G\mbox{-module}\}.
\]
The following natural question arises:
\begin{question}\label{question} Let $\mathfrak{R}$ be a weakly noetherian variety of unitary associative $K$-algebras.
\begin{enumerate}
\item Is $\beta(G,\mathfrak{R})$ finite for all finite groups $G$?
\item If the answer to (1) is yes, find an upper bound for $\beta(G,\mathfrak{R})$ in terms of $|G|$ and some numerical invariants of $\mathfrak{R}$.
\end{enumerate}
\end{question}
The main result of the present paper is a positive answer to the above questions: in Theorem~\ref{main theorem 2}
we give an explicit bound for $\beta(G,\mathfrak{R})$ in terms of $|G|$ and some quantities associated to $\mathfrak{R}$.

The paper is organized as follows.
In Section~\ref{sec:aux} we present necessary facts from the theory of polynomial identities and invariant theory.
First we collect several characterizations of weakly noetherian varieties in Theorem~\ref{finite generation for all finite G}. Next
we recall the theorem of Latyshev \cite{La} that if a finitely generated PI-algebra $R$ satisfies a nonmatrix polynomial identity,
then the commutator ideal $C(R)=R[R,R]R$ of $R$ is nilpotent. We shall also need the Nagata-Higman theorem \cite{Na, H}
that (nonunitary) nil algebras of bounded nil index are nilpotent. We continue with some lemmas about graded modules
and commutative invariant theory and deduce consequences for the noncommutative case.
Section~\ref{sec:main} contains our main results, throughout this section we fix a weakly noetherian variety $\mathfrak{R}$.
In Theorem~\ref{main theorem 1} we provide an upper bound for $\beta(G,\mathfrak{R},V)$ in terms of $\beta(G)$,
the degree of an identity  of the form \eqref{identity in three variables}
(see Theorem~\ref{finite generation for all finite G} (viii)) satisfied by $\mathfrak{R}$,
and the index of nilpotency of the commutator ideal of $F({\mathfrak R},V)$.
In particular, this gives a  new and effective proof of the result of Kharchenko \cite{Kh} that
the weak noetherianity of $F({\mathfrak R},V)$ implies the finite generation of the algebra of $G$-invariants $F({\mathfrak R},V)^G$,
i.e., for the implication $\{\text{(iii) and (viii)}\}\Rightarrow\text{(i)}$ in Theorem \ref{finite generation for all finite G}.
However, this result does not yet answer Question~\ref{question} (i), since  no noncommutative analogues of the result
$\beta(G)=\beta(G,V_{\text{reg}})$ is available, and if $\mathfrak R$ cannot be generated by a finitely generated algebra $R$,
then the class of nilpotency of the commutator ideal of $F({\mathfrak R},V)$ depends on the dimension of the vector space $V$
(and consequently the bound in Theorem~\ref{main theorem 1} tends to infinity as the dimension of $V$ grows).
Using a  different strategy we prove an upper bound on $\beta(\mathfrak{R},G)$ in Theorem~\ref{main theorem 2},
which depends only on  $\vert G\vert$, $\beta(G)$, the degree of an identity of the form \eqref{identity in three variables}
satisfied by $\mathfrak{R}$ and on  the class of nilpotency of nil algebras of index $\ell$,
where $\ell$ is the class of nilpotency of the commutator ideal of one
relatively free algebra, namely $F({\mathfrak R},V_{\vert G\vert})$.
Theorem~\ref{main theorem 2} is independent from Theorem~\ref{main theorem 1}. Although this settles Question~\ref{question},
for low dimensional $G$-modules $V$ the  bound for $\beta(G,\mathfrak{R},V)$ provided in
Theorem~\ref{main theorem 1} has smaller value than the general bound on $\beta(G,\mathfrak{R})$ from Theorem~\ref{main theorem 2}.
Finally, in Theorem~\ref{main theorem 3} we improve the bound on $\beta(G,\mathfrak{R})$ for an abelian group $G$,
and give one which  depends only on the degree of the polynomial identity
(\ref{identity in three variables}) satisfied by $\mathfrak R$ and the order of $G$, but  does not depend on the class of nilpotency
of the commutator ideal of any of the relatively free algebras in $\mathfrak R$.

%%%%%%%%%%%%%%%%%%%%%%%%%%%%%%%%%

\section{Auxiliaries}\label{sec:aux}

Although $F({\mathfrak R},V)$ shares many properties of polynomial algebras, it has turned out that the finite generation
of $F({\mathfrak R},V)^G$ for all finite $G$ forces very strong restrictions on $\mathfrak R$.
Below we summarize the known results, see the survey articles \cite{D3, KS}. Recall that for an associative algebra $R$
\[
[u,v]=u(\text{ad}v)=uv-vu,\quad u,v\in R,
\]
is the commutator of $u$ and $v$. Our commutators are left normed, i.e.,
\[
[u_1,\ldots,u_{n-1},u_n]=[[u_1,\ldots,u_{n-1}],u_n],\quad u_1,\ldots,u_{n-1},u_n\in R,\quad n\geq 3.
\]
First we define a sequence of PI-algebras $R_k$,
$k = 1,2,\ldots$. Let $D_k = K[t]/(t^k)$ and let
\[
R_k = \left(\begin{matrix}
D_k&tD_k\\
tD_k&D_k\\
\end{matrix}\right)
\subset M_2(D_k),
\]
where $M_2(D_k)$ is the $2 \times 2$ matrix algebra with entries
from $D_k$. These algebras appear in \cite{D1} and ``almost'' describe
the T-ideals containing strictly the T-ideal $\text{Id}(M_2(K),V)$.
(Another description of those T-ideals was given by Kemer \cite{K2}.)

\begin{theorem}\label{finite generation for all finite G}
Let $\mathfrak R$ be a variety of algebras. The following
conditions on $\mathfrak R$ are equivalent. If some of them is
satisfied for some vector space $V_{d_0}$ of dimension $d_0 \geq 2$, then all of them hold for all $d$-dimensional vector spaces $V$, $d\geq 2$:

{\rm (i)} The algebra $F({\mathfrak R},V)^G$ is finitely generated for every
finite subgroup $G$ of $GL(V)$.

{\rm (ii)} The algebra $F({\mathfrak R},V)^{\langle g\rangle}$ is finitely
generated, where $g \in GL(V)$ is a matrix of finite multiplicative
order with at least two eigenvalues (or characteristic roots) of
different order.

{\rm (iii)} The algebra $F({\mathfrak R},V)$ is {\bf weakly noetherian}, i.e.,
satisfies the ascending chain condition for two-sided ideals.

{\rm (iv)} Let $S$ be an algebra satisfying all the polynomial
identities of $\mathfrak R$ (i.e., $S \in \mathfrak R$) and generated by $d$
elements $s_1,\ldots,s_d$. Then $S$ is {\bf finitely presented} as
a homomorphic image of $F({\mathfrak R},V)$, i.e., the kernel of the canonical
homomorphism $F({\mathfrak R},V) \longrightarrow S$ defined by
$x_i \longrightarrow s_i$, $i = 1,\ldots,d$, is a finitely generated
ideal of $F({\mathfrak R},V)$.

{\rm (v)} If $S$ is a finitely generated algebra from $\mathfrak R$,
then $S$ is {\bf residually finite}, i.e., for every
nonzero element $s \in S$ there exist a finite dimensional algebra $D$ and a
homomorphism $\varphi:S \longrightarrow D$ such that $\varphi(s) \not=0$.

{\rm (vi)} If $S$ is a finitely generated algebra from $\mathfrak R$,
then $S$ is {\bf representable by matrices}, i.e., there
exist an extension $L$ of the base field $K$ and an integer $k$
such that $S$ is
isomorphic to a subalgebra of the $K$-algebra $M_k(L)$ of all
$k\times k$ matrices with entries from $L$.

{\rm (vii)} If the base field $K$ is countable, then  the set of
pairwise non-isomorphic homomorphic images of $F({\mathfrak R},V)$ is
countable.

{\rm (viii)} For some $n\geq 2$ the variety $\mathfrak R$ satisfies a polynomial identity of the form
\begin{equation}\label{identity in three variables}
\begin{array}{c}
f(x_1,x_2,x_3)\displaystyle{=x_2x_1^nx_3 +\gamma x_3x_1^nx_2
+\sum_{i+j>0}\alpha_{ij}x_1^ix_2x_1^{n-i-j}x_3x_1^j}\\
\displaystyle{+\sum_{i+j>0}\beta_{ij}x_1^ix_3x_1^{n-i-j}x_2x_1^j= 0},\quad\alpha_{ij}, \beta_{ij},\gamma \in K.\\
\end{array}
\end{equation}

{\rm (ix)} For some $n\geq 2$ the variety $\mathfrak R$ satisfies a polynomial identity of the form
\[
x_2x_1^nx_2 +
\sum_{i+j>0}\alpha_{ij}x_1^ix_2x_1^{n-i-j}x_2x_1^j = 0,\quad
\alpha_{ij} \in K.
\]

{\rm (x)} The variety $\mathfrak R$ satisfies the polynomial identity
\[
[x_1,x_2,\ldots,x_2]x_3^n[x_4,x_5,\ldots,x_5] = 0
\]
for sufficiently long commutators and $n$ large enough.

{\rm (xi)} The variety $\mathfrak R$ satisfies
the polynomial identity
\[
[x_1,\ldots,x_n]x_{n+1}\ldots x_{2n}[x_{2n+1},\ldots,x_{3n}] = 0
\]
for some positive integer $n$.

{\rm (xii)} The variety $\mathfrak R$ satisfies a polynomial identity which
does not follow from the polynomial identities
\[
[x_1,x_2][x_3,x_4][x_5,x_6] = 0,\quad
[[x_1,x_2][x_3,x_4],x_5] = 0,\quad
s_4(x_1,x_2,x_3,x_4) = 0.
\]

{\rm (xiii)} The T-ideal $\text{\rm Id}({\mathfrak R},V)$ is not contained in the T-ideal
$\text{\rm Id}(R_3,V)$ of the algebra $R_3$ defined above.
\end{theorem}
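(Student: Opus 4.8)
The plan is to group the thirteen conditions into four clusters and to build a web of implications connecting them, since most of the individual links are recorded in the literature (\cite{D1,K2,La,Kh}) and the theorem is largely a collation; the one genuinely new ingredient, the effective implication $\{\text{(iii)},\text{(viii)}\}\Rightarrow\text{(i)}$, I would borrow from Theorem~\ref{main theorem 1}. The clusters are: the explicit polynomial identities (viii)--(xi); the boundary conditions (xii)--(xiii) locating $\text{Id}(\mathfrak{R},V)$ relative to the critical algebra $R_3$; the structural finiteness properties (iv)--(vii) of finitely generated algebras in $\mathfrak R$; and the invariant-theoretic and noetherian conditions (i)--(iii). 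Within each cluster the equivalences are comparatively soft, so the real work lies in bridging the clusters.

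First I would settle the equivalences among (viii), (ix), (x), (xi) by careful manipulation of identities in characteristic $0$. Condition (ix) is obtained from (viii) by the specialization $x_3\mapsto x_2$ (after normalizing the leading coefficient, which needs attention in the degenerate case where it vanishes), and (viii) is recovered from (ix) by polarization, i.e.\ substituting $x_2\mapsto x_2+x_3$ and extracting the component linear in each of $x_2,x_3$. The commutator forms (x) and (xi) I would reach from (viii)--(ix) by rewriting products of powers of $x_1$ via reorderings of the type $x_1^i u = u x_1^i + [\cdots]$, the point being that an identity of shape (viii) forces the square of the commutator ideal to annihilate once its two factors are separated by a sufficiently high power, or a long enough product, of other variables.

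Next I would tie this cluster to (xii)--(xiii) and to the structural cluster. For (xii)--(xiii) I would invoke the description of the T-ideals lying just above $\text{Id}(M_2(K),V)$ from \cite{D1,K2}: the three identities in (xii) generate the T-ideal of the critical variety associated with $R_3$ (up to the controlled discrepancy flagged by the word ``almost'' in \cite{D1}), so that ``satisfies an identity not following from those three'' (xii), ``$\text{Id}(\mathfrak R,V)\not\subseteq\text{Id}(R_3,V)$'' (xiii), and ``satisfies an identity of the form (viii)'' all express that $\mathfrak R$ lies strictly below this boundary. For (iv)--(vii) I would use representability theory: weak noetherianity (iii) is equivalent to representability (vi) because a representable algebra embeds in $M_k(L)$ over a noetherian ring, and representability then yields residual finiteness (v), finite presentation (iv), and the countability statement (vii) by standard noetherian and model-theoretic arguments. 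The dimension-independence claim follows here because all of these identities and containments are already detected in two variables and propagate to every $V_d$, $d\ge 2$, by substitution.

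The hard part will be the bridge to the invariant-theoretic conditions (i)--(iii), which splits into necessity and sufficiency. For sufficiency, the implication $\{\text{(iii)},\text{(viii)}\}\Rightarrow\text{(i)}$ is exactly Kharchenko's theorem \cite{Kh}; rather than reprove it I would cite the effective bound of Theorem~\ref{main theorem 1}, which controls $\beta(G,\mathfrak R,V)$ via the degree of (viii) and the nilpotency class of the commutator ideal. The subtler, necessity half is $\text{(ii)}\Rightarrow\text{(viii)}$: one must show that if $\mathfrak R$ satisfies no identity of the form (viii), then already for a single $g\in GL(V)$ with two eigenvalues of different multiplicative order the fixed algebra $F(\mathfrak R,V)^{\langle g\rangle}$ fails to be finitely generated. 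This is where the genuine obstruction sits, since it requires producing an explicit infinite, non-finitely-generated family of invariants out of the failure of (viii); together with the trivial $\text{(i)}\Rightarrow\text{(ii)}$ and the chain established above, this closes the cycle and yields the full equivalence.
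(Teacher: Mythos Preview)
The paper does not prove this theorem at all: immediately after the statement it simply lists literature attributions (Kharchenko for (i)$\Leftrightarrow$(iii); L'vov for (iii)$\Leftrightarrow$(viii)$\Leftrightarrow$(ix); Anan'in for (iii)$\Leftrightarrow$(v)$\Leftrightarrow$(vi)$\Leftrightarrow$(x)$\Leftrightarrow$(xi); Tonov for (ix)$\Leftrightarrow$(xii); Drensky \cite{D2} for (ii)$\Leftrightarrow$(viii)$\Leftrightarrow$(xiii); (iii)$\Leftrightarrow$(iv)$\Leftrightarrow$(vii) called obvious; Kemer for the general case of (v)$\Rightarrow$(ix)). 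Your proposal therefore does considerably more than the paper, which is fine, but your attributions diverge from theirs: you never mention L'vov, Anan'in, or Tonov, and you cite \cite{D1,K2} for (xii)--(xiii) where the paper credits Tonov and \cite{D2}. If the goal is to match the paper, you should replace your sketched arguments by the correct citations.

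On the mathematical content of your sketch, two points deserve caution. First, invoking Theorem~\ref{main theorem 1} for $\{\text{(iii)},\text{(viii)}\}\Rightarrow\text{(i)}$ is something the paper itself advertises in the introduction as a \emph{new} proof of Kharchenko's implication, but the paper's treatment of Theorem~\ref{finite generation for all finite G} predates and is logically prior to Theorem~\ref{main theorem 1}; it simply cites \cite{Kh}. Using the later theorem here is permissible (there is no circularity, since Theorem~\ref{main theorem 1} does not rely on (i)$\Rightarrow$anything), but it is not what the paper does. Second, your passage from (viii)--(ix) to (x)--(xi) is too vague: saying that (viii) ``forces the square of the commutator ideal to annihilate once its two factors are separated by a sufficiently high power'' is not the same as the identities in (x)--(xi), which involve long \emph{iterated} (left-normed) commutators rather than products of single commutators, and in any case the paper attributes this bridge to Anan'in rather than deriving it by the manipulation you indicate.
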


The equivalence of (i) and (iii) was established by Kharchenko \cite{Kh},
of (iii), (viii), and (ix) by L'vov \cite{Lv},
of (iii), (v), (vi), (x) and (xi) (for finitely generated algebras $R\in\mathfrak R$) by Anan'in \cite{A},
of (ix) and (xii) by Tonov \cite{T}, of (ii),
(viii) and (xiii) by Drensky \cite{D2}. The equivalence of (iii),
(iv) and (vii) is obvious.
The general case of the implication (v) $\Rightarrow$ (ix) is due to Kemer \cite{K1}
who showed that associative algebras satisfying the Engel identity are Lie nilpotent.
(The theorem that Lie algebras with the Engel identity are nilpotent was proved by
Zelmanov \cite{Z}.)
We want to mention that the study of representable
algebras begins with the paper by Malcev \cite{M}.
The condition (ii) is a generalization of the following result of
Fisher and Montgomery \cite{FM}. If
$\text{Id}({\mathfrak R},V) \subseteq \text{Id}(M_2(K),V)$, $g \in GL(V)$, $g^n = 1$, and $g$ has
at least two characteristic roots of different multiplicative
order, then the algebra of invariants $F({\mathfrak R},V)^{\langle g\rangle}$
is not finitely generated. The condition (ii) gives
a simple criterion to check the equivalent conditions of Theorem \ref{finite generation for all finite G}.
It is sufficient to choose $d = 2$ and
\[
g = \left(\begin{matrix}
-1&0\\
0&1\\
\end{matrix}\right).
\]
If $F({\mathfrak R},V_2)^{\langle g\rangle}$ is finitely generated for the $2$-dimensional vector space $V_2$, then all the
assertions (i) -- (xiii) hold for $F({\mathfrak R},V)$ and $\text{Id}({\mathfrak R},V)$ for any $d$-dimensional vector space $V$, $d \geq 2$.

Several parts of the proof of Theorem \ref{finite generation for all finite G} depend on two important results in the theory of PI-algebras,
which play crucial role also in the proofs of our results.

\begin{theorem}[Latyshev \cite{La}]\label{theorem of Latyshev}
Let $R$ be a finitely generated algebra which satisfies a nonmatrix polynomial identity,
i.e., a polynomial identity which does not hold for the $2\times 2$ matrix algebra $M_2(K)$. Then $R$ satisfies the identity
\[
[x_1,x_2]\cdots[x_{2k+1},x_{2k+2}]=0
\]
for some $k$. Equivalently, the commutator ideal $C(R)=R[R,R]R$ of $R$ is nilpotent of class $k+1$.
\end{theorem}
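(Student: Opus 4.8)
The plan is to proceed in three stages: reduce the given nonmatrix identity to a convenient normal form, extract from it a nil identity of bounded index, and then invoke the Nagata--Higman theorem to deduce nilpotency of the commutator ideal. For the first stage I would pass to a multilinear identity --- since the base field has characteristic $0$, the algebra $M_2(K)$ satisfies an identity if and only if it satisfies all of its multilinearizations, so some multilinearization of our nonmatrix identity is again nonmatrix --- and then run a reduction carried out entirely at the level of $T$-ideals, via the representation theory of the symmetric group acting on the multilinear component of $K\langle V\rangle$. The outcome is that a nonmatrix identity forces an identity of the form \eqref{identity in three variables}, and equivalently (by the identity-theoretic equivalence of (viii) and (ix) in Theorem~\ref{finite generation for all finite G}) one of the form
\[
x_2x_1^nx_2+\sum_{i+j>0}\alpha_{ij}x_1^ix_2x_1^{n-i-j}x_2x_1^j=0 .
\]
This is L'vov's normal form \cite{Lv}; it is exactly here that the hypothesis ``not an identity of $M_2(K)$'' is consumed, and I stress that this step is a statement about $T$-ideals, independent of finite generation and hence not relying on the present theorem.

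In the second stage I would extract from the normal form a nil identity of bounded index. As a first check, reducing the displayed identity modulo $C=C(R)$, where all variables commute, collapses every monomial to $x_1^nx_2^2$ and forces $1+\sum_{i+j>0}\alpha_{ij}=0$ (otherwise $R/C$ would be a nonzero unital commutative algebra satisfying $x_1^nx_2^2=0$, which is absurd at $x_1=x_2=1$); this confirms that the identity carries no content modulo $C$ and that everything substantive is commutator-theoretic. The substantive step is then to substitute commutators for the variable $x_2$ and use the normal form as a rewriting rule to prove that the commutators, regarded modulo a suitable higher term of the commutator filtration of $C$, satisfy a nil identity whose index is bounded in terms of $n$. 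It is here that finite generation genuinely enters: out of the $d$ generators of $R$ one assembles a finitely generated nonunitary algebra carrying this bounded nil identity.

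The third stage is then short: the Nagata--Higman theorem \cite{Na, H} forces this nonunitary algebra to be nilpotent, and translating its nilpotency back into the vanishing of products of commutators in $R$ yields $C^{k+1}=0$, equivalently the identity $[x_1,x_2]\cdots[x_{2k+1},x_{2k+2}]=0$, with $k$ depending on both $n$ and the number $d$ of generators. The dependence on $d$ is unavoidable, and it matches the remark of the introduction that, for a variety not generated by any finitely generated algebra, the nilpotency class of $C(F(\mathfrak R,V))$ grows with $\dim V$; this is precisely why finite generation cannot be dropped from the hypotheses.

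I expect the main obstacle to be exactly the substantive step of the second stage. The naive attempt --- pass to the associated graded algebra $\mathrm{gr}_C(R)=\bigoplus_{i\ge 0}C^i/C^{i+1}$ and read off a power identity --- fails because $\mathrm{gr}_C(R)$ is not commutative: the degree-zero part $R/C$ does not centralize $C/C^2$, since for $r\in R$ and a commutator $c$ the element $[r,c]$ is a longer commutator that need not lie in $C^2$. One must therefore interleave the substitution with systematic use of the Leibniz and Jacobi identities to push all such longer commutators into higher terms of the filtration, keeping the nil index bounded by a function of $n$ alone while letting finite generation control the eventual nilpotency class. Reconciling these two bookkeeping requirements is the crux; by contrast the Nagata--Higman input and the final translation back into vanishing products of commutators are comparatively routine.
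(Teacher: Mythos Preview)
First, note that the paper does not prove this theorem at all: it is quoted from Latyshev \cite{La} as background in Section~\ref{sec:aux}, with no argument supplied, so there is no ``paper's own proof'' to compare against.

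That said, your stage~1 contains a genuine error. You assert that a nonmatrix identity forces, purely at the level of $T$-ideals and independently of finite generation, an identity of L'vov's form \eqref{identity in three variables} (equivalently, form (ix) of Theorem~\ref{finite generation for all finite G}). This implication is false, and the paper itself supplies the counterexamples. By condition~(xii), weak noetherianity is equivalent to satisfying an identity \emph{not} implied by
\[
[x_1,x_2][x_3,x_4][x_5,x_6]=0,\qquad [[x_1,x_2][x_3,x_4],x_5]=0,\qquad s_4=0.
\]
The first two of these are nonmatrix identities (neither holds in $M_2(K)$; check on matrix units), so the variety they cut out is nonmatrix yet fails (xii) and hence fails (viii)--(ix). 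Equivalently, by condition~(xiii), the variety $\mathrm{var}(R_3)$ satisfies nonmatrix identities (since $\mathrm{Id}(R_3,V)\supsetneq \mathrm{Id}(M_2(K),V)$) but is not weakly noetherian and therefore admits no identity of the shape \eqref{identity in three variables}. L'vov's theorem is the equivalence (iii)$\Leftrightarrow$(viii)$\Leftrightarrow$(ix); it does not say that ``nonmatrix'' implies any of them.

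Because stage~1 fails, the whole architecture collapses: you have no normal form to feed into the nil-index extraction of stage~2, and hence nothing for Nagata--Higman to act on in stage~3. Latyshev's argument does not route through the L'vov/weakly-noetherian normal form; finite generation must enter substantively already in the reduction from the nonmatrix identity to a usable commutator relation, not merely as a bound on a Nagata--Higman class at the end. Your instincts that finite generation is essential and that the nilpotency class must depend on $d$ are correct, but the place where finite generation is consumed is earlier than you propose.
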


The next result we need is the Nagata-Higman theorem \cite{Na, H} which is one of the milestones of PI-theory.
(More precisely, it should be called the Dubnov-Ivanov-Nagata-Higman theorem, established first by Dubnov and Ivanov \cite{DuI} in 1943,
and then independently by Nagata \cite{Na} in 1953. The proof of Higman \cite{H} from 1956
covers also the case of nil algebras over fields of positive characteristic $p>n$.)
Since we shall consider
nil and nilpotent algebras which are without unit, in the next lines we
work with nonunitary algebras and, in particular, with the free nonunitary algebra
$K\langle V_{\infty}\rangle_+$ on the vector space $V_{\infty}$ of countable dimension.

\begin{theorem}\label{Nagata-Higman}
Let $R$ be an associative algebra without unit which is nil of bounded index, i.e., $R$ satisfies
a polynomial identity $x^n=0$. Then $R$ is nilpotent, i.e., it satisfies the identity $x_1\cdots x_N=0$ for some $N$.
\end{theorem}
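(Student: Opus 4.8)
The plan is to prove the stronger, \emph{uniform} statement that there is an integer $N=N(n)$, depending only on $n$, such that $R^N=0$ for every nonunitary associative $K$-algebra $R$ satisfying $x^n=0$; since $N$ is independent of $R$, this is equivalent to the assertion that the identity $x_1\cdots x_N=0$ holds (one may just as well work in $K\langle V_{\infty}\rangle_+$ modulo the T-ideal generated by $x^n$ and show it is nilpotent). The first step is to exploit $\mathrm{char}\,K=0$ in order to linearize. Expanding $(b+ta)^n=0$ and reading off the coefficient of $t$ yields the partially linearized identity
\[ \sum_{i=0}^{n-1} b^i a\, b^{n-1-i}=0,\qquad a,b\in R, \]
which I abbreviate as $(\ast)$; reading off the coefficient of $t_1\cdots t_n$ in $(t_1a_1+\cdots+t_na_n)^n=0$ gives in addition the fully multilinear identity $\sum_{\sigma\in S_n}a_{\sigma(1)}\cdots a_{\sigma(n)}=0$. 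These substitutions are legitimate precisely because the characteristic is zero.

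Next I would induct on $n$. For $n=1$ the identity $x=0$ forces $R=0$, so $N(1)=1$. For $n\geq 2$, assume $N(n-1)$ has been constructed. Let $J$ be the two-sided ideal of $R$ generated by all $(n-1)$-st powers $a^{n-1}$, $a\in R$. In the quotient $R/J$ every element $\bar a$ satisfies $\bar a^{\,n-1}=\overline{a^{n-1}}=0$, so $R/J$ is nil of index at most $n-1$; by the induction hypothesis $(R/J)^{N(n-1)}=0$, that is, $R^{N(n-1)}\subseteq J$. Thus everything is reduced to bounding the nilpotency class of the ideal $J$ generated by the $(n-1)$-st powers.

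The heart of the argument, and the step I expect to be the main obstacle, is to show that $J$ is nilpotent with a bound depending only on $n$. Following Higman's grouping one may write $R^{2^n-1}=R^{2^{n-1}-1}\cdot R\cdot R^{2^{n-1}-1}\subseteq J\,R\,J$, so it suffices to prove that a product of the shape $u\,a^{n-1}\,c\,b^{n-1}\,v$ (with $c\in R$ and $u,v$ in the unitization) always vanishes. I would establish this by repeatedly applying $(\ast)$: rewriting $a^{n-1}(c\,b^{n-1}\cdots)$ via $(\ast)$ migrates the power of $a$ to the right of the interior factors while lowering its left exponent, and iterating this for both $a^{n-1}$ and $b^{n-1}$ forces each resulting monomial to accumulate a full $n$-th power of some element, hence to be zero. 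The delicate point is the bookkeeping that controls this cascade of substitutions and guarantees that no term escapes with all exponents below $n$; this is exactly where the invertibility of the multinomial coefficients produced by the linearizations (i.e.\ characteristic zero) is used. Carrying out this combinatorial reduction yields an explicit nilpotency bound for $J$, and combining it with the inclusion $R^{N(n-1)}\subseteq J$ of the previous step produces an $N(n)$ depending only on $n$ (for instance Higman's value $N(n)=2^n-1$), which closes the induction and proves the theorem.
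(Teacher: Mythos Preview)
The paper does not give its own proof of this theorem: it is quoted as a classical result with references to Dubnov--Ivanov, Nagata, and Higman, and only the bounds $n(n+1)/2\le\nu(n)\le n^2$ (and Higman's $2^n-1$) are recorded. So there is nothing in the paper to compare your argument against beyond the attribution to Higman.

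Your outline follows Higman's strategy correctly up to the decisive point: linearize $x^n=0$ to obtain $(\ast)$, set $J$ equal to the ideal generated by $(n-1)$-st powers, use the inductive hypothesis to get $R^{N(n-1)}\subseteq J$, and then try to show $JRJ=0$, which reduces to the identity $a^{n-1}cb^{n-1}=0$ for all $a,b,c\in R$. That reduction is right, and the identity is true. However, the mechanism you describe for proving it---``migrating the power of $a$ to the right while lowering its exponent'' until an $n$-th power is forced---is not how the argument works and would not terminate: repeated use of $(\ast)$ alone just shuffles the exponents without producing a forced $a^n$ or $b^n$. The actual Higman computation is a short but non-obvious double-sum trick: one shows that
\[
S=\sum_{i=0}^{n-1}\sum_{j=0}^{n-1}a^{\,i}cb^{\,j}a^{\,n-1-i}b^{\,n-1-j}=0
\]
by applying $(\ast)$ to either the $i$-sum or the $j$-sum, and then compares $S$ with the partial sum over $0\le i,j\le n-2$. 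The boundary terms $i=n-1$ contribute $n\cdot a^{n-1}cb^{n-1}$ (since $b^{j}b^{n-1-j}=b^{n-1}$), the boundary terms $j=n-1$ contribute $0$ by $(\ast)$, and inclusion--exclusion leaves $(n-1)a^{n-1}cb^{n-1}$; meanwhile two successive applications of $(\ast)$ rewrite $a^{n-1}cb^{n-1}$ as exactly the partial sum. One obtains $n\cdot a^{n-1}cb^{n-1}=0$, hence the claim in characteristic $0$ (or $>n$). This counting step, not a cascade of rewritings, is where the invertibility of $n$ enters. So your sketch has the right architecture, but the heart of the proof---the one paragraph that actually makes Higman's induction close---is missing and the substitute you propose does not do the job.
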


Let $\nu(n)$ be the minimal positive integer such that the polynomial  $x_1\cdots x_{\nu(n)}$
belongs to the T-ideal generated by $x^n$. We call $\nu(n)$ the {\it Nagata-Higman number} for nil algebras of index $n$.
The upper bound $\nu(n)\leq 2^n-1$ is given in the proof of Higman \cite{H}.
The best known bounds for $\nu(n)$
\[
\frac{n(n+1)}{2}\leq \nu(n) \leq n^2
\]
are due respectively to Kuz'min \cite{Ku} (see also \cite[Theorem 6.2.7, page 85]{DF}) and Razmyslov \cite{R}.
Kuz'min \cite{Ku} conjectured that
\[
\nu(n)=\frac{n(n+1)}{2},
\]
and this is confirmed for $n \leq 4$, with partial results for $n=5$, see the comments in \cite[page 79]{DF}.

The Nagata-Higman theorem has a precision which is contained in the proof of Higman \cite{H},
see also \cite[Theorem 6.1.2 (ii), pages 75-77]{DF}.

\begin{proposition}\label{Nagata-Higman as vector space}
The T-ideal generated by $x^n$ in
the free nonunitary algebra $K\langle V_{\infty}\rangle_+$
coincides with the vector space spanned by
all $n$-th powers. In particular,
for $m\geq \nu(n)$
the monomial $x_1\cdots x_m$ has the form
\[
x_1\cdots x_m=\sum\alpha_uu^n
\]
for some $\alpha_u\in K$ and $u\in K\langle V_m\rangle_+$.
\end{proposition}

We continue with some facts about graded modules. Suppose that $\displaystyle R=\bigoplus_{d=0}^\infty R_d$ is a graded $K$-algebra with $R_0=K$,
on which the finite group $G$ acts via graded $K$-algebra automorphisms. Let $\displaystyle M=\bigoplus_{d=0}^\infty M_d$
be a finitely generated graded $R$-module (left module), where each homogeneous component $M_d$ is a $G$-module.
Suppose that the $R$-module structure of $M$ is compatible with the $G$-action on $R$ and $M$, i.e., for $g\in G$, $r\in R$, and $m\in M$ we have
$g(rm)=(gr)(gm)$. Write $\beta(R)$ for the minimal $n$ such that $R$ is generated as a $K$-algebra by homogeneous elements of degree at most $n$, and denote by
$\gamma(M,R)$ the minimal $n$ such that the $R$-module $M$ is generated by homogeneous elements of degree at most $n$.
A set of homogeneous elements generates $R$ as a $K$-algebra if and only if they generate $R_+$ (the sum of the positive degree homogeneous components of $R$)
as an $R$-bimodule (i.e., as an ideal). If $S$ is a finitely generated $K$-subalgebra of $R$,  and $R$ is a finitely generated  $S$-module,
then $M$ is a finitely generated $S$-module, and we have the obvious inequality
\begin{equation}\label{eq:gamma(M,S)}
\gamma(M,S)\le \gamma(M,R)+\gamma(R,S).
\end{equation}
Moreover, we have the inequality
\begin{equation}\label{eq:reynolds}
\gamma(M^G,R^G)\le\gamma(M,R^G).
\end{equation}
Indeed, the Reynolds operator $\rho:M\to M^G$, $\displaystyle m\mapsto\frac{1}{|G|}\sum_{g\in G}gm$ is a surjective $R^G$-module homomorphism,
and therefore $\rho$ maps a homogeneous $R^G$-module generating system of $M$ to an $R^G$-module generating system of $M^G$.
Since $\rho$ preserves the degrees, \eqref{eq:reynolds} follows.
Next we reformulate a result from \cite{CD1}:

\begin{lemma}\label{lemma:CD1}
We have the inequality $\gamma(K[V],K[V]^G)\le \beta(G)-1$.
\end{lemma}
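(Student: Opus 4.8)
The plan is to realize $K[V]$, as a graded $K[V]^G$-module, as a module of covariants, and then to bound the generating degree of an arbitrary module of covariants directly from the Noether bound $\beta(G)$. First I would record this covariant estimate. For an arbitrary $G$-module $U$ consider $K[V\oplus U]=K[V]\otimes K[U]$, bigraded by the pair (degree in the $V$-variables, degree in the $U$-variables); since $G$ acts linearly and separately on $V$ and on $U$, the invariant subalgebra $K[V\oplus U]^G$ is a bigraded subalgebra, and by Theorem~\ref{thm:noether}(ii) it is generated by bihomogeneous invariants of total degree at most $\beta(G,V\oplus U)\le\beta(G)$. Now look at the component of $U$-degree $1$, which is exactly $(K[V]\otimes U)^G$, the module of $U$-covariants, a graded $K[V]^G$-module with respect to the $V$-degree. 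Writing such a bihomogeneous invariant of $U$-degree $1$ as a polynomial in the chosen generators, every monomial that occurs has its $U$-degrees summing to $1$, so precisely one factor is a generator of $U$-degree $1$ and $V$-degree at most $\beta(G)-1$, while the remaining factors are generators of $U$-degree $0$, i.e. elements of $K[V]^G$. Hence $(K[V]\otimes U)^G$ is generated over $K[V]^G$ by elements of $V$-degree at most $\beta(G)-1$, that is $\gamma((K[V]\otimes U)^G,K[V]^G)\le\beta(G)-1$ for every $U$.

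Next I would specialize $U$ to the regular representation $V_{\text{reg}}$, placed in $V$-degree $0$. Using $V_{\text{reg}}^*\cong V_{\text{reg}}$ together with the adjunction $(K[V]\otimes V_{\text{reg}})^G\cong\operatorname{Hom}_{KG}(V_{\text{reg}},K[V])\cong K[V]$, where the last isomorphism is $\phi\mapsto\phi(1)$, one checks that this is an isomorphism of graded $K[V]^G$-modules: it preserves degree because $V_{\text{reg}}$ sits in degree $0$, and it is $K[V]^G$-linear because $K[V]^G$ commutes with the $G$-action. Combining this identification with the covariant estimate for $U=V_{\text{reg}}$ yields $\gamma(K[V],K[V]^G)\le\beta(G)-1$, as claimed. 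As an alternative to the regular representation one can instead decompose $K[V]\cong\bigoplus_\lambda\operatorname{Hom}_G(W_\lambda,K[V])\otimes W_\lambda$ over the irreducibles $W_\lambda$ and apply the covariant estimate to each multiplicity module $\operatorname{Hom}_G(W_\lambda,K[V])\cong(K[V]\otimes W_\lambda^*)^G$; the resulting maximum over $\lambda$ is again bounded by $\beta(G)-1$.

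The step I expect to require the most care is the bigraded bookkeeping in the covariant estimate: one must ensure that the algebra generators of $K[V\oplus U]^G$ can be chosen bihomogeneous, and that isolating the $U$-degree-$1$ component genuinely forces all but one factor of each generator-monomial into $K[V]^G$. The only other point to verify carefully is that the adjunction $(K[V]\otimes V_{\text{reg}})^G\cong K[V]$ respects both the grading and the $K[V]^G$-module structure; this is routine, but it is precisely what converts the estimate on covariants into a statement about $K[V]$ itself. I note that no Cohen--Macaulay or freeness hypothesis on $K[V]$ over $K[V]^G$ enters the argument, which matters for the intended applications where such structure is unavailable; the proof relies only on the definition of $\beta(G)$, the Reynolds-type surjectivity underlying \eqref{eq:reynolds}, and complete reducibility in characteristic $0$.
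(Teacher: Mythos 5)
Your proof is correct, but it takes a genuinely different route from the paper, whose proof is essentially a citation: Lemma 3.1 of \cite{CD1} supplies an irreducible $G$-module $U$ with $\beta(G,V\oplus U)\geq\gamma(K[V],K[V]^G)+1$, and the trivial bound $\beta(G,V\oplus U)\leq\beta(G)$ then finishes the argument. You instead prove the inequality from scratch, and both of your key steps check out: the bigraded bookkeeping is sound (the generators of $K[V\oplus U]^G$ may be taken bihomogeneous because the $G$-action preserves the bidegree, and in any generator-monomial of $U$-degree $1$ exactly one factor has $U$-degree $1$, hence $V$-degree at most $\beta(G)-1$, while the remaining factors are invariants of $U$-degree $0$, i.e., elements of $K[V]^G$); and the identification $K[V]\cong(K[V]\otimes V_{\text{reg}})^G\cong\mathrm{Hom}_{KG}(V_{\text{reg}},K[V])$ is indeed an isomorphism of graded $K[V]^G$-modules, as one can see directly: the invariants of $K[V]\otimes KG$ are exactly the elements $\sum_{g\in G}g(f)\otimes g$ with $f\in K[V]$, and $f\mapsto\sum_{g\in G}g(f)\otimes g$ is degree-preserving and $K[V]^G$-linear. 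Comparing the two: the cited lemma of \cite{CD1} is sharper --- it exhibits a \emph{single irreducible} $U$ and gives a lower bound for $\beta(G,V\oplus U)$ in terms of $\gamma(K[V],K[V]^G)$, which is the direction needed for the applications in \cite{CD1} --- and its proof runs your argument in reverse, manufacturing from a top-degree element of $K[V]/K[V]K[V]^G_+$ an indecomposable invariant of $V\oplus U$ one degree higher. Your forward argument is weaker but entirely self-contained and elementary, using only Theorem~\ref{thm:noether}(ii) and the regular representation (your alternative via the isotypic decomposition is essentially the mechanism inside \cite{CD1}); the price of using the $|G|$-dimensional module $V_{\text{reg}}$ instead of an irreducible is harmless here, since the bound $\beta(G,V\oplus U)\leq\beta(G)$ is uniform in $U$. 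One cosmetic remark: your closing sentence credits the Reynolds-operator surjectivity \eqref{eq:reynolds}, but your argument never actually uses it --- the extraction of $U$-degree-$1$ components replaces it.
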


\begin{proof}
Lemma 3.1 of \cite{CD1} gives that there exists an irreducible $G$-module $U$
such that
\[
\beta(G,V\oplus U)\geq\gamma(K[V],K[V]^G)+1.
\]
Since $\beta(G,V\oplus U)$ is trivially bounded  by $\beta(G)$, we obtain the desired inequality.
\end{proof}

\begin{corollary}\label{general case for bound for modules}
Let $M$ be a finitely generated graded $R$-module as above, where $R=K[V]$ for some $G$-module $V$.
Then $M$ and $M^G$ are finitely generated $R^G$-modules, and we have the inequalities
\[\gamma(M^G,R^G)\le \gamma(M,R^G)\le \gamma(M,R)+\beta(G)-1.\]
\end{corollary}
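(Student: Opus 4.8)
The plan is to chain together the three facts already established in the preceding pages: the Reynolds-operator inequality \eqref{eq:reynolds}, the subadditivity estimate \eqref{eq:gamma(M,S)}, and Lemma~\ref{lemma:CD1}. The corollary will follow by a direct substitution once the finite-generation hypotheses underlying \eqref{eq:gamma(M,S)} are checked.

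First I would verify that the hypotheses needed to invoke \eqref{eq:gamma(M,S)} with $S=R^G$ are in force. By Theorem~\ref{thm:noether} the algebra $R^G=K[V]^G$ is finitely generated, and since $G$ is finite, every element of $R=K[V]$ is integral over $R^G$: for a variable $x$ the polynomial $\prod_{g\in G}(T-gx)$ has $G$-invariant coefficients, so it lies in $R^G[T]$ and is annihilated by $x$. As $R$ is generated as a $K$-algebra by finitely many such integral elements, $R$ is a finitely generated $R^G$-module. Consequently the finitely generated $R$-module $M$ is finitely generated over the subalgebra $R^G$ as well; and since $R^G$ is a finitely generated commutative algebra, hence Noetherian, the submodule $M^G\subseteq M$ is also a finitely generated $R^G$-module. (Alternatively, $M^G$ is the image of $M$ under the $R^G$-linear Reynolds operator, so its finite generation over $R^G$ is immediate.) This establishes the first assertion of the corollary.

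For the inequalities, the leftmost one $\gamma(M^G,R^G)\le\gamma(M,R^G)$ is precisely \eqref{eq:reynolds}. For the rightmost one I would apply \eqref{eq:gamma(M,S)} with $S=R^G$ and the module-finiteness just verified to obtain $\gamma(M,R^G)\le\gamma(M,R)+\gamma(R,R^G)$, and then bound the second summand via Lemma~\ref{lemma:CD1}, since $\gamma(R,R^G)=\gamma(K[V],K[V]^G)\le\beta(G)-1$. Combining the two estimates yields the full chain of inequalities.

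Because every ingredient is furnished by the material above, there is no genuine obstacle here; the only point demanding care is confirming that $R$ is a finitely generated $R^G$-module, as this is exactly what legitimizes the use of \eqref{eq:gamma(M,S)}. Everything else reduces to assembling the pieces in the correct order.
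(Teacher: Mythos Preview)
Your proof is correct and follows essentially the same route as the paper: invoke \eqref{eq:reynolds}, then \eqref{eq:gamma(M,S)} with $S=R^G$, then Lemma~\ref{lemma:CD1}. You supply more detail than the paper does in checking that $R$ is a finitely generated $R^G$-module (the paper leaves this implicit), but the argument is the same.
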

\begin{proof}
By \eqref{eq:reynolds} and \eqref{eq:gamma(M,S)} we have $\gamma(M^G,R^G)\le \gamma(M,R^G)\le \gamma(M,R)+\gamma(R,R^G)$.
By Lemma~\ref{lemma:CD1} we have $\gamma(R,R^G)\le \beta(G)-1$.
\end{proof}

\begin{lemma}\label{generators of invariants in X and Y}
Let $V$ and $W$ be isomorphic as $G$-modules. Identifying $K[V]\otimes K[W]$ and $K[V\oplus W]$,
we have
\[\gamma(K[V\oplus W]^G,K[V]^G\otimes K[W]^G)\leq 2(\beta(G)-1).\]
\end{lemma}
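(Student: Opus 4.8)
The plan is to factor the problem through the diagonal Reynolds operator, reducing the computation of $\gamma(K[V\oplus W]^G,K[V]^G\otimes K[W]^G)$ to the much easier task of bounding the generation degree of $K[V\oplus W]$ itself over the tensor product subalgebra. Throughout I write $A=K[V]$, $B=K[W]$, $R=A\otimes B\cong K[V\oplus W]$ with $G$ acting diagonally, and $S=A^G\otimes B^G$. The crucial structural observation is that $S\subseteq R^G$: every element of $A^G\otimes B^G$ is fixed by the diagonal action of $G$.

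First I would bound $\gamma(R,S)$ directly. If $\{a_i\}$ is a homogeneous system generating the $A^G$-module $A$ in degrees at most $\gamma(A,A^G)$, and $\{b_j\}$ does the same for $B$ over $B^G$, then $\{a_i\otimes b_j\}$ generates $R=A\otimes B$ as an $S$-module: writing $a=\sum_i c_ia_i$ and $b=\sum_j d_jb_j$ with $c_i\in A^G$, $d_j\in B^G$, one gets $a\otimes b=\sum_{i,j}(c_i\otimes d_j)(a_i\otimes b_j)$. Since $\deg(a_i\otimes b_j)\leq \gamma(A,A^G)+\gamma(B,B^G)$, this yields
\[\gamma(R,S)\leq \gamma(A,A^G)+\gamma(B,B^G)\leq 2(\beta(G)-1),\]
where the last inequality applies Lemma~\ref{lemma:CD1} to each factor.

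Second, I would pass from $R$ to $R^G$ by a Reynolds operator argument, exactly as in the proof of \eqref{eq:reynolds}. The point is that the Reynolds operator $\rho\colon R\to R^G$ is not only $R^G$-linear but in particular $S$-linear, because $S\subseteq R^G$; concretely, for $s\in S$ and $m\in R$ one has $\rho(sm)=s\rho(m)$. As $\rho$ is a degree-preserving surjective $S$-module homomorphism, it sends a homogeneous $S$-module generating set of $R$ onto one of $R^G$, giving $\gamma(R^G,S)\leq \gamma(R,S)$. Combining the two steps proves the claim.

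The only place requiring care is that the coefficient ring $S=A^G\otimes B^G$ is the invariant ring of the product group $G\times G$ rather than of the diagonal copy of $G$, so Corollary~\ref{general case for bound for modules} does not apply verbatim; the two-step split above circumvents this by first measuring generation over $S$ and only afterward invoking invariants through $\rho$. I would also remark that the hypothesis $V\cong W$ is not actually used in the estimate, since Lemma~\ref{lemma:CD1} bounds $\gamma(A,A^G)$ and $\gamma(B,B^G)$ for arbitrary $G$-modules; the isomorphism is recorded only because it is the situation relevant to the later application.
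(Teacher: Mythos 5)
Your proof is correct and follows essentially the same route as the paper's: the paper likewise tensors the $K[V]^G$-module generators of $K[V]$ (from Lemma~\ref{lemma:CD1}) with those of $K[W]$ to get $\gamma(K[V\oplus W],K[V]^G\otimes K[W]^G)\le 2(\beta(G)-1)$, and then applies the Reynolds-operator inequality \eqref{eq:reynolds} with trivial $G$-action on the coefficient ring, which is exactly your $S$-linearity observation. Your remark that the hypothesis $V\cong W$ is not needed for the estimate is also accurate; the paper uses it only to write the generators of both factors by the same polynomials, and records it because it is the situation arising in the application.
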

\begin{proof}
By Lemma~\ref{lemma:CD1} the $K[V]^G$-module $K[V]$ is generated by homogeneous polynomials $w_1(V),\ldots,w_s(V)$ of degree $\leq \beta(G)-1$. The products
$\{w_a(V)w_b(W)\mid a,b=1,\dots,s\}$ generate $K[V\oplus W]$ as an $R=K[V]^G\otimes K[W]^G$-module, thus
$\gamma(K[V\oplus W],R)\le 2(\beta(G)-1)$.   Now apply \eqref{eq:reynolds} for $M=K[V\oplus W]$ viewed as an $R$-module
(where $G$ acts trivially on $R$, i.e., $R=R^G$).
\end{proof}

\begin{lemma}\label{lemma from commutative invariants}
Let $V$ be a $G$-module  and let $\pi:K[V]\to R$ be a surjective $K$-algebra homomorphism of $K[V]$ onto $R$ such that
$\ker(\pi)\subset K[V]_+$ and $G(\ker(\pi))=\ker(\pi)$.
Then $R_+^{\beta(G)}$ is contained in the ideal $RR_+^G$ of $R$ generated by the subalgebra $R_+^G$
consisting of the elements fixed by the induced action of $G$ on $R_+=\pi(K[V]_+)$.
\end{lemma}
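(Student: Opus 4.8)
The plan is to reduce everything to the polynomial algebra $K[V]$, where the grading is available, and then transport the conclusion along $\pi$. Concretely, I would first prove the graded analogue
\[
K[V]_+^{\beta(G)}\subseteq K[V]\cdot K[V]_+^G,
\]
and then apply $\pi$ to both sides. This pushforward works because $\pi$ is a surjective $G$-equivariant algebra homomorphism with $\pi(K[V]_+)=R_+$: the image of $K[V]_+^{\beta(G)}$ is exactly $R_+^{\beta(G)}$ (a product of $\beta(G)$ elements of $K[V]_+$ maps to a product of $\beta(G)$ elements of $R_+$), while the image of the ideal $K[V]\cdot K[V]_+^G$ is the ideal $R\cdot\pi(K[V]_+^G)$. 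So the statement will follow once I know $\pi(K[V]_+^G)=R_+^G$.

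Second, to prove the graded inclusion I would invoke Lemma~\ref{lemma:CD1}, which gives $\gamma(K[V],K[V]^G)\le\beta(G)-1$: the $K[V]^G$-module $K[V]$ is generated by homogeneous elements $w_1,\dots,w_s$ of degree at most $\beta(G)-1$. Let $h\in K[V]$ be homogeneous of degree $\deg h\ge\beta(G)$. Writing $h=\sum_i c_iw_i$ with each $c_i\in K[V]^G$ homogeneous of degree $\deg h-\deg w_i$, one gets $\deg c_i\ge\beta(G)-(\beta(G)-1)=1$, so every $c_i$ is a positive-degree invariant, i.e. $c_i\in K[V]_+^G$. Hence every homogeneous element of degree $\ge\beta(G)$ lies in $K[V]\cdot K[V]_+^G$. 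Since any product of $\beta(G)$ elements of $K[V]_+$ is a sum of homogeneous pieces of degree $\ge\beta(G)$, this yields $K[V]_+^{\beta(G)}\subseteq K[V]\cdot K[V]_+^G$.

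The step that needs care --- and the only place where the hypotheses on $\ker(\pi)$ enter --- is the transport along $\pi$. Since $\ker(\pi)\subseteq K[V]_+$ we have $R=K\oplus R_+$, so $R_+$ is a genuine augmentation ideal and its powers and its invariant subalgebra make sense; but $\ker(\pi)$ need not be homogeneous, so $R$ carries no grading and the module-theoretic argument of the previous paragraph cannot be run directly on $R$. This is precisely why I route through $K[V]$. To finish I must check $\pi(K[V]_+^G)=R_+^G$: the inclusion $\subseteq$ is immediate from $G$-equivariance, and for $\supseteq$ I would use the Reynolds operator $\rho$. Given $r\in R_+^G$, choose $s\in K[V]_+$ with $\pi(s)=r$; then $\rho(s)\in K[V]_+^G$, and since $\pi$ commutes with $\rho$ and $r$ is $G$-fixed, $\pi(\rho(s))=\rho(\pi(s))=\rho(r)=r$. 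With $\pi(K[V]_+^G)=R_+^G$ established, applying $\pi$ to the graded inclusion gives $R_+^{\beta(G)}\subseteq R\cdot R_+^G$, as required. The main (mild) obstacle is thus the bookkeeping of the equivariance of $\pi$ together with the failure of $R$ to be graded, rather than any hard estimate; the genuine content is entirely packaged in Lemma~\ref{lemma:CD1}.
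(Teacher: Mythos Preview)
Your proof is correct and follows essentially the same route as the paper: establish the inclusion $K[V]_+^{\beta(G)}\subseteq K[V]\,K[V]_+^G$ from Lemma~\ref{lemma:CD1} and then push it forward along $\pi$. The only cosmetic differences are that the paper invokes the graded Nakayama lemma where you give the direct degree-counting argument, and that you spell out the Reynolds-operator verification of $\pi(K[V]_+^G)=R_+^G$, which the paper leaves implicit.
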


\begin{proof}
First we shall establish the lemma for $R=K[V]$. By the graded Nakayama lemma (see for example Lemma 3.5.1 in \cite{DK})
$\gamma(K[V],K[V]^G)$ coincides with the top degree of the factor space $K[V]/K[V]K[V]^G_+$ (inheriting a grading from $K[V]$). Therefore
$K[V]_+^{\beta(G)}\subseteq K[V]K[V]^G_+$ holds by Lemma~\ref{lemma:CD1}. Applying $\pi$ to this inclusion we get
$R_+^{\beta(G)}=\pi(K[V]_+)^{\beta(G)}=\pi(K[V]_+^{\beta(G)})\subseteq \pi(K[V]K[V]^G_+)=RR^G_+$.
\end{proof}

\begin{corollary}\label{ideal of squares of invariants}
Let $V$ be a $G$-module and let $\pi:K\langle V\rangle\to R$ be a surjective $K$-algebra homomorphism of $K\langle V\rangle$ onto $R$ such that
$\ker(\pi)\subset K\langle V\rangle_+$ and $G(\ker(\pi))=\ker(\pi)$ (so the action of $G$ on $K\langle V\rangle$
induces a $G$-action on $R$ via $K$-algebra automorphisms).
If the commutator ideal of $R$ is nilpotent of class $\ell+1$,
then $R_+^{2\beta(G)(\ell+1)}$ is contained in the ideal $R(R_+^G)^2R$ of $R$ generated by the square of the subalgebra $R_+^G$
of  $G$-fixed elements of $R_+=\pi(K\langle V\rangle_+)$.
In particular, if $u\in R_+$, then
\[
u^{2\beta(G)(\ell+1)}\in R(R_+^G)^2R.
\]
\end{corollary}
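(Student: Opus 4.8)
The plan is to reduce everything to the commutative quotient $\bar R=R/C(R)$, apply Lemma~\ref{lemma from commutative invariants} there, and then lift the resulting containment back to $R$ at the cost of an error term lying in the commutator ideal, which is afterwards killed by an induction exploiting its nilpotency. Throughout write $C=C(R)=R[R,R]R$ and $J=R(R_+^G)^2R$; both are two-sided ideals, and by hypothesis $C^{\ell+1}=0$.

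First I would set up the commutative reduction. Since $\bar R$ is commutative, the surjection $K\langle V\rangle\xrightarrow{\pi}R\to\bar R$ kills all commutators and hence factors through $K[V]=K\langle V\rangle/C(K\langle V\rangle)$, giving a surjection $\bar\pi\colon K[V]\to\bar R$ whose kernel is contained in $K[V]_+$ and is $G$-stable (both $\ker(\pi)$ and the free commutator ideal lie in $K\langle V\rangle_+$ and are $G$-stable, and all maps are $G$-equivariant). Lemma~\ref{lemma from commutative invariants} applied to $\bar\pi$ therefore yields $\bar R_+^{\beta(G)}\subseteq\bar R\,\bar R_+^G$. Because the characteristic of $K$ is $0$, the Reynolds operator shows that the $G$-equivariant surjection $R\to\bar R$ restricts to a surjection $R_+^G\to\bar R_+^G$ (a $G$-fixed lift of a positive-degree invariant may be taken in the augmentation ideal $R_+$, which $G$ preserves). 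Lifting the displayed containment, and using that $\bar R$ is commutative so that $\bar R\,\bar R_+^G=\bar R_+^G\,\bar R$, I obtain in $R$ the two relations
\[
R_+^{\beta(G)}\subseteq RR_+^G+C \qquad\text{and}\qquad R_+^{\beta(G)}\subseteq R_+^G R+C.
\]

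Next I would prove the base case $R_+^{2\beta(G)}\subseteq J+C$. Writing $R_+^{2\beta(G)}=R_+^{\beta(G)}R_+^{\beta(G)}$ and feeding the second relation into the left factor and the first into the right factor gives $R_+^{2\beta(G)}\subseteq(R_+^G R+C)(RR_+^G+C)$. Every summand containing a factor $C$ lands in $C$, since $C$ is a two-sided ideal, so the only term to analyse is $R_+^G R R_+^G$. For $s_1,s_2\in R_+^G$ and $r\in R$ one has $s_1 r s_2=s_1 s_2 r+s_1[r,s_2]$, where $s_1 s_2 r\in(R_+^G)^2R\subseteq J$ and $s_1[r,s_2]\in R[R,R]\subseteq C$; hence $R_+^G R R_+^G\subseteq J+C$ and the base case follows. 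The induction $R_+^{2m\beta(G)}\subseteq J+C^m$ is then purely formal: multiplying the base case by the inductive hypothesis gives $R_+^{2(m+1)\beta(G)}\subseteq(J+C)(J+C^m)=JJ+JC^m+CJ+C^{m+1}$, and since $J$ is a two-sided ideal the first three summands lie in $J$, leaving $J+C^{m+1}$. Taking $m=\ell+1$ and using $C^{\ell+1}=0$ yields $R_+^{2\beta(G)(\ell+1)}\subseteq J=R(R_+^G)^2R$, which is the claim; the ``in particular'' statement is immediate, since $u^{2\beta(G)(\ell+1)}\in R_+^{2\beta(G)(\ell+1)}$ for $u\in R_+$.

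I expect the main obstacle to be the lifting step: checking cleanly that Lemma~\ref{lemma from commutative invariants} applies to $\bar\pi$ and, above all, that $R_+^G$ surjects onto $\bar R_+^G$, so that the commutative containment lifts to $R_+^{\beta(G)}\subseteq RR_+^G+C$ with genuine invariants of $R$ rather than merely invariants of $\bar R$. Once this is in place, the noncommutative heart of the argument---commuting $r$ past the invariant $s_2$ at the cost of a commutator lying in $C$---is short, and the nilpotency of $C$ does the rest through the formal induction.
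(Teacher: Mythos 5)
Your proposal is correct and follows essentially the same route as the paper: pass to the commutative quotient $\bar R=R/C$, apply Lemma~\ref{lemma from commutative invariants} there, lift invariants back to $R$ via the Reynolds operator (the paper phrases this as $R^G/C^G=(R/C)^G$ by complete reducibility), and then absorb the error term in $C$ using $C^{\ell+1}=0$. The only differences are cosmetic: the paper squares inside $\bar R$ before lifting (getting $R_+^{2\beta(G)}\subseteq R(R_+^G)^2+C$ in one step) and then expands $(R(R_+^G)^2+C)^{\ell+1}$ directly, whereas you lift first, commute explicitly via $s_1rs_2=s_1s_2r+s_1[r,s_2]$, and run a formal induction on powers of $C$.
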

\begin{proof}
Let $\bar{R}=R/C$, where $C$ is the commutator ideal of $R$.
Since $C$ is stable under the action of $G$, there is an induced $G$-action on $R/C$ via $K$-algebra automorphisms.
Complete reducibility of the $G$-action on $R$ implies that  $R^G/C^G=(R/C)^G$.
Then, by Lemma \ref{lemma from commutative invariants}, $(\bar{R}_+)^{\beta(G)}\subseteq \bar{R}\bar{R}_+^G$
and hence  $(\bar{R}_+)^{2\beta(G)}\subseteq \bar{R}(\bar{R}_+^G)^2$.
Therefore $R_+^{2\beta(G)}\subset R(R_+^G)^2+C$.
Since $C^{\ell+1}=(0)$, we have that $(R(R_+^G)^2+C)^{\ell+1}\subseteq R(R_+^G)^2R$, implying
$R_+^{2\beta(G)(\ell+1)}\subseteq R(R_+^G)^2R$.
\end{proof}

\begin{lemma}\label{action of abelian groups}
Let $G$ be a finite abelian group, and denote by  $G^{\ast}$ the group of the characters (i.e., homomorphisms $G\to K^\times$) of $G$.
Suppose that  $G$ acts on $V$  as a group of diagonal matrices, i.e.,
the action on the basis $X_d$ of $V$ is given by
\[
g(x_i)=\chi_i(g)x_i,\quad \chi_i\in G^{\ast}, \quad i=1,\ldots,d.
\]
Then for any $\vert G\vert$ words $u_1,\ldots, u_{\vert G\vert}\in K\langle V\rangle$,
\[
u_i=x_{ij_1}\cdots x_{ij_{s_i}},\quad x_{ij}\in X_d, \quad i=1,\ldots,\vert G\vert,
\]
the product $u_1\cdots u_{\vert G\vert}$ contains a $G$-invariant subword of the form $u_{i+1}\cdots u_j$, $1\leq i< j\leq\vert G\vert$.
\end{lemma}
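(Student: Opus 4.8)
The plan is to exploit the diagonality of the action in order to turn each word $u_i$ into a simultaneous eigenvector for $G$, thereby reducing the combinatorial claim to a pigeonhole statement inside the character group $G^{\ast}$.

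First I would record the \emph{weight} of each word. Since $g(x_i)=\chi_i(g)x_i$, the action of any $g\in G$ on a monomial multiplies it by the product of the characters attached to its letters. Hence for each $i$ the word $u_i$ is a common eigenvector, $g u_i=\psi_i(g)u_i$, where $\psi_i:=\chi_{ij_1}\cdots\chi_{ij_{s_i}}\in G^{\ast}$ is the product (taken in the character group) of the characters of the letters occurring in $u_i$. The key observation is that weights are multiplicative under concatenation: for any block of consecutive factors one computes $g(u_{i+1}\cdots u_j)=(\psi_{i+1}\cdots\psi_j)(g)\,(u_{i+1}\cdots u_j)$, so such a block is a $G$-eigenvector of weight $\psi_{i+1}\cdots\psi_j$, and it is $G$-invariant if and only if this weight is the trivial character.

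Next I would pass to partial products. Put $P_0$ equal to the trivial character and $P_k:=\psi_1\cdots\psi_k$ for $k=1,\ldots,\vert G\vert$, giving a list $P_0,P_1,\ldots,P_{\vert G\vert}$ of $\vert G\vert+1$ elements of $G^{\ast}$. Because $G$ is finite abelian we have $\vert G^{\ast}\vert=\vert G\vert$, so by the pigeonhole principle two of these partial products must coincide, say $P_i=P_j$ with $i<j$. Then
\[
\psi_{i+1}\cdots\psi_j=P_i^{-1}P_j
\]
is the trivial character, which by the previous paragraph means exactly that the subword $u_{i+1}\cdots u_j$ is $G$-invariant, as claimed.

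I expect there to be no genuine obstacle here; the only points requiring care are the two pieces of bookkeeping. One must verify that the weight of a concatenation is the product of the weights, which is immediate from the diagonal action, and one must count the $\vert G\vert+1$ partial products (including the empty one, $P_0$) against $\vert G\vert=\vert G^{\ast}\vert$ so that the pigeonhole principle forces a repetition. Everything else is formal, and the resulting invariant block has the stated form $u_{i+1}\cdots u_j$ with the indices in the range $0\le i<j\le\vert G\vert$.
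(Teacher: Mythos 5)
Your proof is correct and follows essentially the same route as the paper's: assign to each word $u_i$ its weight (a character), form the $\vert G\vert+1$ partial products, and apply the pigeonhole principle in $G^{\ast}$. The only slip is your claim that $\vert G^{\ast}\vert=\vert G\vert$ for finite abelian $G$ --- over a field $K$ lacking the relevant roots of unity one only has $\vert G^{\ast}\vert\le\vert G\vert$ (the paper states exactly this, with equality when $K$ is algebraically closed), but this inequality is all the pigeonhole argument needs, so your proof stands.
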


\begin{proof}
The group $G$ acts on $u_i$ and on $u_1\cdots u_i$ by the rule
\[
g(u_i)=g(x_{ij_1})\cdots g(x_{ij_{s_i}})=\chi_{ij_1}(g)\cdots \chi_{ij_{s_i}}(g)x_{ij_1}\cdots x_{ij_{s_i}}=\chi^{(i)}(g)u_i,
\]
\[
g(u_1\cdots u_i)=\chi^{(1)}(g)\cdots \chi^{(i)}(g)u_1\cdots u_i.
\]
 Consider the $\vert G\vert +1$ products of characters
\[
\chi^{(0)}=\chi_{\text{\rm id}},\quad
\chi^{(i)}=\chi_1\cdots\chi_i,\quad i=1,\ldots,\vert G\vert.
\]
Since $\vert G^{\ast}\vert\le \vert G\vert$ (with equality if $K$ is algebraically closed),
by the Pigeonhole Principle, there exist two characters $\chi^{(i)}=\chi_1\cdots\chi_i$ and
$\chi^{(j)}=\chi_1\cdots\chi_j$, $0\leq i<j\leq \vert G\vert$ which are equal. Hence $\chi_{i+1}\cdots\chi_j=\chi_{\text{\rm id}}$.
This means that the product $u_{i+1}\cdots u_j$ is $G$-invariant.
\end{proof}

%%%%%%%%%%%%%%%%%%%%%%%%%%%%%%%%

\section{The main results}\label{sec:main}
Till the end of the paper we fix a variety of unitary algebras $\mathfrak R$ satisfying the polynomial identity
(\ref{identity in three variables}) from Theorem \ref{finite generation for all finite G} (viii).
Replacing $x_3$ by $x_1x_3$ in $f(x_1,x_2,x_3)=0$ from (\ref{identity in three variables}) we obtain a multihomogeneous consequence of total degree $n+3$
of the form
\begin{equation}\label{identity of special kind}
h(x_1,x_2,x_3)=x_2x_1^{n+1}x_3+x_1h_1(x_1,x_2,x_3)+h_2(x_1,x_2,x_3)x_1=0
\end{equation}
where $h_1,h_2$ are multihomogeneous of total degree $n+2$.
In (\ref{identity of special kind}) we replace $x_1,x_2,x_3$ by $u\in K\langle V_{\infty}\rangle_+$, $y$, and $z$, respectively, and  obtain
\begin{equation}\label{eq:master}
h(u,y,z)=yu^{n+1}z+uh_1(u,y,z)+h_2(u,y,z)u=0,
\end{equation}
i.e., $yu^{n+1}z$ can be expressed as a linear combination of polynomials starting or finishing with $u$.
Applying Proposition \ref{Nagata-Higman as vector space} we obtain a consequence of (\ref{identity in three variables})
of the form
\begin{equation}\label{Nagata-Higman type identity for R}
\begin{array}{c}
h'(x_1,\ldots,x_{\nu},y,z)=yx_1\cdots x_{\nu}z+\sum_{i=1}^{\nu}(x_iv_i'(x_1,\ldots,\widehat{x_i},\ldots,x_{\nu},y,z)\\
\\
+v_i''(x_1,\ldots,\widehat{x_i},\ldots,x_{\nu},y,z)x_i)=0,\quad \nu=\nu(n+1).\\
\end{array}
\end{equation}
We fix the notation
\[
F=F({\mathfrak R},V)\text{ and } C=C({\mathfrak R},V)=F[F,F]F
\]
for the relatively free algebra on $V$ and its commutator ideal, respectively.
It is well known that $C^p$ modulo $C^{p+1}$ is spanned on the products
\begin{equation}\label{typical element in the power of the commutator ideal}
w=X_d^{a^{(0)}}[x_{i_1},x_{j_1}]X_d^{a^{(1)}}[x_{i_2},x_{j_2}]\cdots X_d^{a^{(p-1)}}[x_{i_p},x_{j_p}]X_d^{a^{(p)}},
\end{equation}
where $X_d^a=x_1^{a_1}\cdots x_d^{a_d}$.
Note that $C^p/C^{p+1}$ is a naturally a $K[V]$-bimodule.
Equivalently, we consider  $C^p/C^{p+1}$ as a module over $K[V\oplus W]\cong K[V]\otimes K[W]$, where
the $G$-module $W$ is isomorphic to $V$ and has a basis $Y_d=\{y_1,\ldots,y_d\}$. The action of
$X_d^bY_d^c=X_d^b\otimes Y_d^c\in K[V]\otimes K[W]$ is defined by
\[
X_d^bY_d^c(w)=X_d^{a^{(0)}+b}[x_{i_1},x_{j_1}]X_d^{a^{(1)}}[x_{i_2},x_{j_2}]\cdots X_d^{a^{(p-1)}}[x_{i_p},x_{j_p}]X_d^{a^{(p)}+c}.
\]

\begin{lemma}\label{bounds of lengths between commutators}
Let $\nu=\nu(n+1)$ be the Nagata-Higman number for the nil algebras of index $n+1$.
Then for $p\geq 1$ the vector space $C^p/C^{p+1}$ is spanned by
the products (\ref{typical element in the power of the commutator ideal})
such that $0\leq a_i^{(q)}\leq n$ and $a_1^{(q)}+\cdots+a_d^{(q)}\leq \nu-1$
for each $a^{(q)}=(a_1^{(q)},\ldots,a_d^{(q)})$, $q=1,\ldots,p-1$.
\end{lemma}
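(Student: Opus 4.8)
The plan is to present the asserted spanning set as the set of \emph{irreducible} products for a rewriting procedure: I would show that any product $w$ of the form \eqref{typical element in the power of the commutator ideal} one of whose interior blocks violates a bound can be rewritten, modulo $C^{p+1}$, as a $K$-linear combination of standard products whose interior is ``smaller,'' and then close the argument by well-founded induction. One preliminary observation keeps the bookkeeping clean: modulo $C^{p+1}$ the letters inside any single block may be freely sorted into the canonical order $X_d^a=x_1^{a_1}\cdots x_d^{a_d}$, since each transposition $x_\ell x_m=x_mx_\ell+[x_\ell,x_m]$ trades a letter of a block for an extra commutator, raising the number of commutator factors from $p$ to $p+1$ and so contributing only to $C^{p+1}$. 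Hence I may always assume $w$ is in standard form and silently re-sort the output of every rewriting step at no cost.

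The two rewriting moves are exactly the two available identities. Fix an interior block, the $q$-th with $1\le q\le p-1$; it is flanked by the commutators $[x_{i_q},x_{j_q}]$ and $[x_{i_{q+1}},x_{j_{q+1}}]$, and these supply the nonempty left and right contexts the identities demand. If $a_i^{(q)}\ge n+1$ for some $i$, then after sorting the block contains a contiguous factor $x_i^{n+1}$; writing $w=y\,x_i^{n+1}\,z$ and applying \eqref{eq:master} with $u=x_i$ rewrites $w$ as a combination of words in which this run is broken and a copy of $x_i$ is expelled from the block to one of its two sides. If instead every exponent is $\le n$ but $a_1^{(q)}+\cdots+a_d^{(q)}\ge\nu=\nu(n+1)$, the block is a word of length $\ge\nu$, and applying the Nagata--Higman consequence \eqref{Nagata-Higman type identity for R} (obtained from \eqref{eq:master} via Proposition~\ref{Nagata-Higman as vector space}) again expels a letter and shortens the block. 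In both cases the flanking commutators are left intact, so every resulting word still has exactly $p$ commutator factors and, after re-sorting, is a standard product \eqref{typical element in the power of the commutator ideal} whose $q$-th block is strictly smaller, the expelled letters having moved into the neighbouring blocks.

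This mechanism also explains why only the \emph{interior} blocks are controlled, matching the statement: the outermost blocks $a^{(0)}$ and $a^{(p)}$ carry a commutator on only one side, so one of the contexts $y,z$ is empty, neither identity applies to shorten them, and they merely absorb the letters expelled from the interior. Thus the procedure pushes degree from the interior outward toward the unconstrained ends, and the irreducible products are precisely those with $0\le a_i^{(q)}\le n$ and $a_1^{(q)}+\cdots+a_d^{(q)}\le\nu-1$ for $1\le q\le p-1$.

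The main obstacle is termination. Each move conserves the multidegree and merely redistributes it, so no decrease in total degree is available; moreover, since \eqref{eq:master} produces terms beginning \emph{and} terms ending with $u$, a move on an interior block throws letters toward \emph{both} neighbours, and a naive degree- or position-weighted potential is not monotone. Worse, a move on a block that is not adjacent to an end keeps all expelled letters in the interior, so the quantity $\sum_{q=1}^{p-1}(a_1^{(q)}+\cdots+a_d^{(q)})$ is only nonincreasing, and within a fixed value of it the interior redistribution is conservative. The heart of the proof is therefore to organize the reductions so that overflow is driven monotonically to the boundary --- for instance by ordering the (finitely many) standard products of a fixed multidegree lexicographically by total interior degree and then by a secondary statistic witnessing progress of the boundary-bound letters, and verifying that \emph{every} term of each reduction strictly decreases in this order. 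Establishing that such a well-founded order exists (equivalently, that the branching reduction halts) is the delicate step; once it is in place, well-founded induction over it shows that the bounded-interior products span $C^p/C^{p+1}$.
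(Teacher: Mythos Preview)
Your overall strategy---rewrite a ``bad'' standard product using the identities \eqref{eq:master} and \eqref{Nagata-Higman type identity for R}, then induct---is the same as the paper's, and your diagnosis that the only issue is termination is correct. But you have made termination artificially hard by applying the identity \emph{locally}, taking $y$ and $z$ to be (essentially) the flanking commutators $[x_{i_q},x_{j_q}]$ and $[x_{i_{q+1}},x_{j_{q+1}}]$. With that choice the expelled copies of $u$ land in blocks $q-1$ and $q+1$, the total interior degree $\sum_{q=1}^{p-1}|a^{(q)}|$ is in general conserved, and you are left searching for a secondary well-founded order that you never actually produce. As written, the proof has a gap at exactly the point you flag as ``the delicate step.''

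The paper removes the difficulty with a single stroke: instead of a local context, take
\[
y=[x_{i_1},x_{j_1}]X_d^{a^{(1)}}\cdots[x_{i_q},x_{j_q}]x_1^{a_1^{(q)}}\cdots x_{i}^{a_i^{(q)}-n-1},\qquad
z=x_{i+1}^{a_{i+1}^{(q)}}\cdots[x_{i_p},x_{j_p}],
\]
so that $y$ begins with the \emph{first} commutator and $z$ ends with the \emph{last}, while the outer blocks $X_d^{a^{(0)}}$ and $X_d^{a^{(p)}}$ are kept \emph{outside} the substitution: $w=X_d^{a^{(0)}}\cdot(yu^{n+1}z)\cdot X_d^{a^{(p)}}$. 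Then in every term of $uh_1(u,y,z)$ or $h_2(u,y,z)u$ the guaranteed outermost $u$ lies beyond the first or last commutator and therefore lands in an \emph{outer} block. A direct count (valid also for the terms of $h_1,h_2$ in which $z$ precedes $y$, where the commutators get cyclically reordered but the outer blocks still absorb what used to be block~$q$) shows that the total interior degree drops by at least one. Hence a plain induction on $\sum_{q=1}^{p-1}|a^{(q)}|$ suffices; no secondary ordering is needed. The same choice of $y,z$ works for the Nagata--Higman move \eqref{Nagata-Higman type identity for R}. Once you make this global choice of context, your remaining observations (re-sorting inside blocks modulo $C^{p+1}$, why only interior blocks are constrained) are fine and the argument closes immediately.
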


\begin{proof}
Take $w$ as in \eqref{typical element in the power of the commutator ideal}.
By induction on $\displaystyle \sum_{j=1}^{p-1}\sum_{i=1}^da_i^{(j)}$ we shall show that $w+C^{p+1}$ belongs to the subspace of $C^p/C^{p+1}$
spanned by the special elements in the statement. If the above sum is zero, then all $a_i^{(j)}=0$
for $j=1,\dots,p-1$ and $i=1,\dots,d$, so the induction can start.
If $a_i^{(j)}\geq n+1$ for some $i\in\{1,\ldots,d\}$ and some $j\in \{1,\ldots,p-1\}$ then
we have $w=X_d^{a^{(0)}}yu^{n+1}zX_d^{a^{(p)}}$
where
 \[u:=x,\quad
y:=[x_{i_1},x_{j_1}]X_d^{a^{(1)}}
\cdots x_{i-1}^{a_{i-1}^{(j)}}x_i^{a_i^{(j)}-n-1},
\quad z:=x_{i+1}^{a_{i+1}^{(j)}}\cdots X_d^{a^{(p-1)}}[x_{i_p},x_{j_p}].
\]
Applying the identity \eqref{eq:master}
we express $yu^{n+1}z$ (and hence $w$)  as a linear combination of elements of the form \eqref{typical element in the power of the commutator ideal} with
$\displaystyle \sum_{j=1}^{p-1}\sum_{i=1}^da_i^{(j)}$ one less than for $w$.
If $a_1^{(q)}+\cdots+a_d^{(q)}\leq \nu-1$ for some $q\in\{1,\ldots,p-1\}$,
then we can use the polynomial identity (\ref{Nagata-Higman type identity for R}) in a similar vein.
\end{proof}

Now we are ready to prove the main results of our paper.

\begin{theorem}\label{main theorem 1}
Let $V$ be a $d$-dimensional $G$-module ($d\ge 2$) and $\mathfrak R$  a weakly noetherian variety of associative algebras properly
containing the variety of commutative algebras
(so $C\neq 0$).
Then
\[
\beta(G,{\mathfrak R},V)\leq \oneconst(\mathfrak{R},d)+3(\beta(G)-1)
\]
where
\[\oneconst(\mathfrak{R},d)=2(\ell(\mathfrak{R},d)-1)+(\ell(\mathfrak{R},d)-2)\cdot \min\{\nu(n(\mathfrak{R}))-1,(n(\mathfrak{R})-1)d\};\]
here $\ell(\mathfrak{R},d)$ is the nilpotence class of the commutator ideal $C$ of $F$,  $n(\mathfrak{R})$
is the minimal positive integer $n+1$ such that $\mathfrak{R}$ satisfies a polynomial identity of the form
\eqref{identity of special kind} of degree $n+3$, whereas $\beta(G)$ is the Noether number for $G$
and $\nu(n(\mathfrak{R}))$ is the Nagata-Higman number for nil algebras of index $n(\mathfrak{R})$.
\end{theorem}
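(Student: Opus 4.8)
The plan is to filter $F$ by the powers of its commutator ideal,
$F=C^0\supseteq C^1\supseteq\cdots\supseteq C^{\ell}=(0)$ with $\ell=\ell(\mathfrak R,d)$, and to pass to the associated graded algebra. Since $\mathrm{char}\,K=0$ the $G$-action is completely reducible, so $(C^p)^G/(C^{p+1})^G=(C^p/C^{p+1})^G$, and the associated graded algebra of $F^G$ for this filtration is $\mathrm{gr}\,F^G=\bigoplus_{p=0}^{\ell-1}(C^p/C^{p+1})^G$, bigraded by the ordinary degree and by the filtration level $p$. A degree-preserving lifting argument reduces the theorem to bounding generators of $\mathrm{gr}\,F^G$: if $\mathrm{gr}\,F^G$ is generated as a $K$-algebra by elements of ordinary degree $\le b$, then so is $F^G$, because a homogeneous $f\in(C^p)^G$ has an image in $(C^p/C^{p+1})^G$ expressible through the chosen generators, whose degree-preserving lifts to $F^G$ differ from $f$ by an element of $(C^{p+1})^G$ of the same degree, and one induces downward on the finite filtration. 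Here the surjectivity in \eqref{eq:restricted_surjections} and its graded, $G$-equivariant nature guarantee that the relevant generators lift without raising the degree.

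Next I would identify the algebra generators of $\mathrm{gr}\,F^G$. The bottom piece $\mathrm{gr}^0\,F^G=(F/C)^G=K[V]^G$ is generated in degree $\le\beta(G,V)\le\beta(G)$. For $p\ge 1$ the piece $\mathrm{gr}^p\,F^G=(C^p/C^{p+1})^G$ is a bimodule over $\mathrm{gr}^0\,F^G=K[V]^G$, and under the identification preceding Lemma~\ref{bounds of lengths between commutators} the left and right multiplications by $K[V]^G$ correspond exactly to the $K[V]^G\otimes K[W]^G$-module structure on $(C^p/C^{p+1})^G$ (recall $W\cong V$). Consequently an algebra generating set of $\mathrm{gr}\,F^G$ is obtained from the degree-$\le\beta(G)$ algebra generators of $K[V]^G$ together with, for each $p\ge 1$, a set of $K[V]^G\otimes K[W]^G$-module generators of $(C^p/C^{p+1})^G$; thus one may take $b=\max\bigl(\beta(G),\ \max_{1\le p\le\ell-1}\gamma((C^p/C^{p+1})^G,K[V]^G\otimes K[W]^G)\bigr)$.

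To estimate these module generation degrees I would invoke Lemma~\ref{bounds of lengths between commutators}: stripping the outer blocks $X_d^{a^{(0)}}$ and $X_d^{a^{(p)}}$, which constitute precisely the $K[V]\otimes K[W]$-action, the spanning elements with $a^{(0)}=a^{(p)}=0$ have degree at most $2p+(p-1)B$, where $B=\min\{\nu(n(\mathfrak R))-1,(n(\mathfrak R)-1)d\}$; hence $\gamma(C^p/C^{p+1},K[V]\otimes K[W])\le 2p+(p-1)B$. Passing to invariants, I would combine the Reynolds inequality \eqref{eq:reynolds}, the additivity \eqref{eq:gamma(M,S)} with $S=K[V]^G\otimes K[W]^G$, and the estimate $\gamma(K[V\oplus W],K[V]^G\otimes K[W]^G)\le 2(\beta(G)-1)$ established inside the proof of Lemma~\ref{generators of invariants in X and Y}, to get $\gamma((C^p/C^{p+1})^G,K[V]^G\otimes K[W]^G)\le 2p+(p-1)B+2(\beta(G)-1)$. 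This is increasing in $p$, so its maximum over $1\le p\le\ell-1$ is attained at $p=\ell-1$ and equals $2(\ell-1)+(\ell-2)B+2(\beta(G)-1)=c(\mathfrak R,d)+2(\beta(G)-1)$, which dominates $\beta(G)$. This yields $\beta(G,\mathfrak R,V)\le c(\mathfrak R,d)+2(\beta(G)-1)$, marginally sharper than the stated bound; replacing the one-step passage to invariants above by the two-step version (\eqref{eq:gamma(M,S)} together with Corollary~\ref{general case for bound for modules} and Lemma~\ref{generators of invariants in X and Y}) accounts for the difference between $3(\beta(G)-1)$ and $2(\beta(G)-1)$.

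The genuinely substantive input is Lemma~\ref{bounds of lengths between commutators}, where the polynomial identity \eqref{identity of special kind} and the Nagata--Higman bound enter to bound the interior blocks; granting it, the main obstacle is organizational rather than computational. The two points requiring care are matching the $\mathrm{gr}^0\,F^G$-bimodule structure on the graded pieces with the commutative $K[V]^G\otimes K[W]^G$-module structure, and verifying that the degree-preserving lifting from $\mathrm{gr}\,F^G$ back to $F^G$ genuinely reduces algebra generation to module generation. Both hinge on the complete reducibility of the $G$-action, which supplies the exactness $(C^p)^G/(C^{p+1})^G=(C^p/C^{p+1})^G$ used throughout.
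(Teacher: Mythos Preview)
Your argument is correct and follows essentially the same strategy as the paper's proof: filter by powers of the commutator ideal, lift homogeneous $K[V]^G\otimes K[W]^G$-module generators of each $(C^p/C^{p+1})^G$ to $F^G$, and bound the module generation degree via Lemma~\ref{bounds of lengths between commutators}. The one genuine difference is in the passage to invariants: the paper goes in two steps, first from $K[V\oplus W]$ to $K[V\oplus W]^G$ via Corollary~\ref{general case for bound for modules} (costing $\beta(G)-1$) and then from $K[V\oplus W]^G$ to $K[V]^G\otimes K[W]^G$ via Lemma~\ref{generators of invariants in X and Y} (costing $2(\beta(G)-1)$), for a total of $3(\beta(G)-1)$; you instead use the intermediate estimate $\gamma(K[V\oplus W],K[V]^G\otimes K[W]^G)\le 2(\beta(G)-1)$ extracted from the proof of Lemma~\ref{generators of invariants in X and Y} together with \eqref{eq:reynolds} applied directly over the $G$-trivial ring $K[V]^G\otimes K[W]^G$, which legitimately saves $(\beta(G)-1)$ and yields the sharper bound $c(\mathfrak R,d)+2(\beta(G)-1)$.
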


\begin{proof}
The commutator ideal $C$ of $F$ is stable under the action of $GL(V)$, and hence under the action of its finite
subgroup $G$. Therefore we have an induced action of $G$ on $C^p/C^{p+1}$, and the $G$-invariants in $C^p/C^{p+1}$ can be lifted to $G$-invariants in
$C^p\subset F$. Note  that
$F/C\cong K[V]$ and hence $(F/C)^G\cong K[V]^G$. Moreover,  $(C^p/C^{p+1})^G$ is a $K[V]^G\otimes K[W]^G$-submodule of $C^p/C^{p+1}$ (whose
$K[V\oplus W]$-module structure was introduced in the paragraph preceding Lemma~\ref{bounds of lengths between commutators}).
Every homogeneous system
of generators $\bar u_{p1},\ldots,\bar u_{pr_p}$ of the $K[V]^G\otimes K[W]^G$-module $(C^p/C^{p+1})^G$
can be lifted to  sets of homogeneous $G$-invariants $u_{p1},\ldots,u_{pr_p}\in C^p$ with $\deg(u_{pj})=\deg(\bar u_{pj})$, and
it is straightforward that the elements $u_{pq}$, $p=0,1,\ldots,\ell(\mathfrak{R},d)-1$, $q=1,\ldots,r_p$,
generate $F^G$ as a $K$-algebra (indeed, by induction on $k$ one shows that the images of the elements $u_{pj}$
generate the subalgebra of $G$-invariants in the factor algebra $F/C^k$). Therefore it is sufficient to show that
\begin{equation}\label{eq:Cp}\gamma((C^p/C^{p+1})^G,K[V]^G\otimes K[W]^G)
\leq \oneconst(\mathfrak{R},d)+3(\beta(G)-1)\end{equation}
for $p=0,1,\ldots,\ell(\frak{R},d)-1$.
By  Lemma \ref{bounds of lengths between commutators} we have
\[\gamma(C^p/C^{p+1},K[V\oplus W])
\leq 2p+(p-1)\min(\nu(n(\mathfrak{R}))-1,(n(\mathfrak{R})-1)d) \leq  \oneconst(\mathfrak{R},d)
\]
(the latter inequality is immediate from $p\le \ell(\mathfrak{R},d)-1$).
It follows by Corollary \ref{general case for bound for modules} that
\[\gamma((C^p/C^{p+1})^G,K[V\oplus W]^G)\leq
\oneconst(\mathfrak{R},d)+(\beta(G)-1).
\]
Applying \eqref{eq:gamma(M,S)} with $M=(C^p/C^{p+1})^G$, $R=K[V\oplus W]^G$, $S=K[V]^G\otimes K[W]^G$, and using that
$\gamma(R,S)\leq 2(\beta(G)-1)$ by Lemma \ref{generators of invariants in X and Y}, we conclude that
\[\gamma((C^p/C^{p+1})^G,K[V]^G\otimes K[W]^G)\leq \gamma((C^p/C^{p+1})^G,K[V\oplus W]^G)+2(\beta(G)-1).\]
Combining the above two inequalities we obtain the desired inequality \eqref{eq:Cp}
which completes the proof of the theorem.
\end{proof}

\begin{theorem}\label{main theorem 2}
Let $\mathfrak{R}$ be a weakly noetherian variety of associative algebras properly containing the variety of commutative algebras and $G$ a finite group.
Then
\[
\beta(G,{\mathfrak R})\leq (\nu(n(\mathfrak{R}))-1) \cdot  \nu\left(2\beta(G)\ell(\mathfrak{R},|G|)\right)-1
\]
where $n(\mathfrak{R})$ and  $\ell(\mathfrak{R},|G|)$ are the same as in Theorem~\ref{main theorem 1}.
\end{theorem}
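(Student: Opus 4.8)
The plan is to bound the degrees of an algebra generating set of $F^G$ uniformly in $\dim V$ by using the $G$-action to cut every high power down to a bounded number of variables. Throughout write $D=2\beta(G)\,\ell(\mathfrak R,|G|)$, $\nu_2=\nu(D)$ and $\nu_1=\nu(n(\mathfrak R))$, so that the target bound reads $(\nu_1-1)\nu_2-1$.

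The key new ingredient (an \emph{orbit trick}) is as follows. Let $u\in F_+$ be homogeneous and set $W_u=\mathrm{span}_K\{gu\mid g\in G\}\subseteq F$, a $G$-submodule with $\dim W_u\le|G|$. The subalgebra $R_u\subseteq F$ generated by $W_u$ is $G$-stable, lies in $\mathfrak R$, and is a $G$-equivariant homomorphic image of the free algebra on $W_u$; being generated by at most $|G|$ elements, its commutator ideal is nilpotent of class at most $\ell(\mathfrak R,|G|)$ (the nilpotence class of the commutator ideal does not increase under passage to subalgebras on fewer generators or to homomorphic images). Applying Corollary~\ref{ideal of squares of invariants} to $R_u$ with $\ell+1=\ell(\mathfrak R,|G|)$ gives $u^{D}\in R_u((R_u)_+^G)^2R_u$, and since $(R_u)_+^G\subseteq F_+^G$ and $R_u\subseteq F$ this yields
\[
u^{D}\in J:=F(F_+^G)^2F\qquad\text{for every homogeneous }u\in F_+ .
\]
I would then feed this into the Nagata--Higman bound: by Proposition~\ref{Nagata-Higman as vector space} every product $z_1\cdots z_m$ with $m\ge\nu_2=\nu(D)$ is a $K$-linear combination of $D$-th powers $v^{D}$ of words $v$ in $z_1,\dots,z_m$, so substituting homogeneous elements of $F_+$ and applying the orbit trick termwise gives $F_+^{\nu_2}\subseteq J$.

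The remaining and hardest step is to convert membership in the two-sided ideal $J$ into genuine decomposability in the algebra $F^G$, and this is where the factor $\nu_1-1$ should appear. The difficulty is intrinsic to the noncommutative setting: for an invariant $\sigma$ sitting in the interior of a word, $\rho(a\sigma b)=\tfrac1{|G|}\sum_{g}g(a)\,\sigma\,g(b)$ need not be a product of invariants, whereas if $\sigma$ is at an end then $\rho(\sigma b)=\sigma\rho(b)$ \emph{is} decomposable. The plan is therefore to relocate the invariant factors produced by the orbit trick to the two ends of each word, paying for this with commutators: straightening $a\sigma b=\sigma ab+[a,\sigma]b$ moves the defect into the commutator ideal $C$. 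To keep this finite and independent of $\dim V$, I would organize it along the filtration of $F$ by the powers of $C$ together with Lemma~\ref{bounds of lengths between commutators}. Using the identity \eqref{Nagata-Higman type identity for R} one never has to transport an invariant across a commutative block of more than $\nu_1-1$ letters (a longer block can be re-expressed with boundary terms), while $F_+^{\nu_2}\subseteq J$ should force the contribution of the layer $(C^p/C^{p+1})^G$ to be decomposable as soon as $p\ge\nu_2$; thus only the layers $p<\nu_2$ can carry indecomposable generators, and Lemma~\ref{bounds of lengths between commutators} bounds their degrees by a quantity of order $(\nu_1-1)\nu_2$, with the bookkeeping expected to give exactly $(\nu_1-1)\nu_2-1$.

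The main obstacle is precisely this last reconciliation of the ideal $J$ with the algebra generators of $F^G$: one must prove that the commutator corrections produced when invariants are pushed to the word boundaries are themselves decomposable, \emph{uniformly in $\dim V$}. It is exactly this uniformity requirement that forces the two devices to be used in tandem — the orbit trick to cap the number of relevant variables at $|G|$ (hence the appearance of $\ell(\mathfrak R,|G|)$ rather than $\ell(\mathfrak R,\dim V)$), and the identity \eqref{Nagata-Higman type identity for R} to cap the commutative gaps at $\nu_1-1$ — so that the resulting degree bound no longer depends on the nilpotence class of the commutator ideal of $F$ itself.
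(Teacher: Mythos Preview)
Your first two steps---the orbit trick yielding $u^D\in J:=F(F_+^G)^2F$ for every homogeneous $u\in F_+$, and the Nagata--Higman conclusion $F_+^{\nu_2}\subseteq J$---coincide exactly with the paper's argument.

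The gap is in your third step. Your assertion that ``$F_+^{\nu_2}\subseteq J$ should force the contribution of the layer $(C^p/C^{p+1})^G$ to be decomposable as soon as $p\ge\nu_2$'' is precisely the circularity you yourself flagged a few lines earlier: membership in the two-sided ideal $J=F(F_+^G)^2F$ is \emph{not} membership in $F_+^GF_++F_+F_+^G$, and only the latter gives decomposability after applying the Reynolds operator. So nothing you have proved distinguishes the high layers from the low ones. Even granting that step, Lemma~\ref{bounds of lengths between commutators} only bounds the degrees of $K[V\oplus W]$-module generators of $C^p/C^{p+1}$; turning these into $K$-algebra generators of $(C^p/C^{p+1})^G$ costs the extra $\beta(G)$-dependent terms of Theorem~\ref{main theorem 1}, so the bookkeeping does not collapse to $(\nu_1-1)\nu_2-1$. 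In short, the commutator-straightening cascade you sketch has no termination mechanism uniform in $\dim V$.

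The paper avoids the commutator filtration in this step altogether. Since $F_+^{\nu_2}\subseteq J$, every element of $F_+^{(\nu_1-1)\nu_2}$ is a linear combination of products
\[
w=(u_1'v_1'v_1''u_1'')(u_2'v_2'v_2''u_2'')\cdots(u_{\nu_1-1}'v_{\nu_1-1}'v_{\nu_1-1}''u_{\nu_1-1}'')
\]
with $v_i',v_i''\in F_+^G$. Regroup this as
\[
w=u_1'\,\underbrace{v_1'}_{x_1}\,\underbrace{(v_1''u_1''u_2'v_2')}_{x_2}\cdots\underbrace{(v_{\nu_1-2}''u_{\nu_1-2}''u_{\nu_1-1}'v_{\nu_1-1}')}_{x_{\nu_1-1}}\,\underbrace{v_{\nu_1-1}''}_{x_{\nu_1}}\,u_{\nu_1-1}'' .
\]
Between $y:=u_1'$ and $z:=u_{\nu_1-1}''$ there are exactly $\nu_1=\nu(n(\mathfrak R))$ blocks $x_1,\dots,x_{\nu_1}$, each of which either \emph{is} an invariant or \emph{begins and ends} with one. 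Now apply the identity \eqref{Nagata-Higman type identity for R} once: it rewrites $yx_1\cdots x_{\nu_1}z$ as a linear combination of words in which some $x_i$ has been moved to the extreme left or right. Since every $x_i$ begins and ends with an element of $F_+^G$, each such word lies in $F_+^GF_++F_+F_+^G$. Hence $F_+^{(\nu_1-1)\nu_2}\subseteq F_+^GF_++F_+F_+^G$, and applying \eqref{eq:reynolds} to $F_+$ viewed as an $F^G\otimes(F^G)^{\mathrm{op}}$-module gives $\beta(F^G)\le(\nu_1-1)\nu_2-1$. The idea you were missing is that identity \eqref{Nagata-Higman type identity for R} transports an entire block to the boundary \emph{in one stroke, without commutator corrections}; the regrouping (requiring \emph{two} consecutive invariants per block, which is exactly why $J$ involves $(F_+^G)^2$) is what makes every block safe to land at either end.
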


\begin{proof}
Take a $G$-module $V$ and consider $F=F(\mathfrak{R},V)$.
Let $u\in F_+$ and let $W$ be the vector subspace of $F_+$ spanned by $\{g(u)\mid g\in G\}$.
Clearly $W$ is a $G$-submodule of $F$. Hence the (unitary) subalgebra $R$ of $F$ generated by
$W$ is also a $G$-submodule of
$F$ and $R_+^G\subseteq F_+^G$. Since $R$ is generated by at most $\vert G\vert$ elements,
it is a homomorphic image of the relatively free algebra
$F({\mathfrak R},V_{\vert G\vert})$ of rank $\vert G\vert$ in $\mathfrak R$.
Hence the commutator ideal $C(R)$ of $R$ satisfies $C(R)^{\ell(\mathfrak{R},\vert G\vert)}=0$.
On the other hand, the identity map $W\to W$ extends to a $G$-equivariant $K$-algebra surjection $\pi: K\langle W\rangle\to R$.
By Corollary \ref{ideal of squares of invariants},
\[
u^{2\beta(G)\ell(\frak{R},\vert G\vert)}\in R(R_+^G)^2R\subseteq F(F_+^G)^2F.
\]
Hence the factor algebra $F_+/F(F_+^G)^2F$
is nil of index $2\beta(G)\ell(\frak{R},\vert G\vert)$.
Let $\xi=\nu(2\beta(G)\ell(\frak{R},\vert G\vert))$. By Proposition \ref{Nagata-Higman as vector space},
for every $u_1,\ldots,u_{\xi}\in F_+$, the product of their images $\bar{u_i}$ in
$F_+/F(F_+^G)^2F$ satisfies
$\bar{u}_1\cdots\bar{u}_{\xi}=\bar{0}$, i.e.,
\[
u_1\cdots u_{\xi}\in F(F_+^G)^2F.
\]
Hence $F_+^{\xi}$ is spanned by products $u'v'v''u''$, where $v',v''\in F_+^G$ and $u',u''\in F$. Consequently, setting $\nu=\nu(n(\mathfrak{R}))-1$,
we have that $F_+^{\xi \nu}$ is spanned by products
\begin{eqnarray*}\label{monomials in Theorem 2}
w&=&(u_1'v_1'v_1''u_1'')\cdots (u_{\nu}'v_{\nu}'v_{\nu}''u_{\nu}'')\\
&=&u_1'(v_1')(v_1''u_1''u_2'v_2')(v_2''u_2''u_3'v_3')\cdots (v_{\nu-1}''u_{\nu-1}''u_{\nu}'v_{\nu}')(v_{\nu}'')u_{\nu}''
\end{eqnarray*}
(note that by assumption $C\neq 0$, hence $n(\mathfrak{R})\ge 2$ and so $\nu=\nu(n(\mathfrak{R}))-1\ge 2$).
Applying the identity \eqref{Nagata-Higman type identity for R} for
\[
y:=u_1', \quad z:=u_{\nu}'', \quad x_1:=v_1',\quad x_{\nu+1}:=v_{\nu}''\]
and
\[\quad x_{i+1}:=v_{i}''u_i''u_{i+1}'v_{i+1}'\quad \mbox{for}\quad i=1,\ldots,\nu-1,
\]
we obtain that $w$ can be expressed as a linear combination of products of the form
$st$ where  $s$ or $t$ belongs to $\{v_1',v_1'',\dots,v_{\nu}',v_{\nu}''\}$ and both $s$ and $t$ belong to $F_+$.
Therefore we have that $F_+^{\xi \nu}\subseteq F^G_+F_++F_+F^G_+$, implying that
$\gamma(F_+,F^G\otimes (F^G)^{\mathrm{op}})\leq \xi \nu-1$ (recall that the $F^G$-bimodule $F_+$
can be thought of as a left $F^G\otimes (F^G)^{\mathrm{op}}$-module, where  $(F^G)^{\mathrm{op}}$ stands for the opposite ring of $F^G$).
It follows by \eqref{eq:reynolds} that $\gamma(F^G_+,F^G\otimes (F^G)^{\mathrm{op}}) \leq \xi \nu-1$, implying in turn the desired inequality
$\beta(F^G)\leq \xi \nu-1$.
\end{proof}

\begin{theorem}\label{main theorem 3}
Let $G$ be a finite abelian group and
let $\mathfrak R$ be a weakly noetherian variety of associative algebras properly containing the variety of commutative algebras.
Then
\[
\beta(G,{\mathfrak R})\leq (\nu(n(\mathfrak{R}))-1)\vert G\vert(\vert G\vert+1)-1
\]
where $n(\mathfrak{R})$ is the same as  in Theorem \ref{main theorem 1}.
\end{theorem}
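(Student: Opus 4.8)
The plan is to run the argument of Theorem~\ref{main theorem 2} almost verbatim, replacing the one ingredient that introduced the nilpotence class $\ell(\mathfrak{R},|G|)$ -- namely Corollary~\ref{ideal of squares of invariants} together with the Nagata--Higman theorem, which produced the factor $\nu(2\beta(G)\ell(\mathfrak{R},|G|))$ -- by a purely combinatorial bound coming from the abelian pigeonhole Lemma~\ref{action of abelian groups}, which will produce the factor $|G|(|G|+1)$ with no reference to any nilpotence class. Since Lemma~\ref{action of abelian groups} is stated for a diagonal action, I would first reduce to that situation: extending the base field to its algebraic closure $\bar K$ makes the finite abelian group $G$ act diagonally on $\bar K\otimes_K V$, and in characteristic $0$ neither $F(\mathfrak{R},V)$ (base change commutes with forming relatively free algebras, as T-ideals are generated by multilinear elements), nor its subalgebra of invariants (since $|G|$ is invertible), nor the generation degree $\beta(G,\mathfrak{R},V)$ changes under this extension. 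So I may assume $G$ acts on the basis $X_d$ by characters $\chi_1,\dots,\chi_d\in G^{\ast}$.

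The heart of the matter is the claim $F_+^{|G|(|G|+1)}\subseteq F(F_+^G)^2F$, the abelian analogue of Corollary~\ref{ideal of squares of invariants}. To prove it, take a monomial $w=x_{k_1}\cdots x_{k_N}$ of degree $N=|G|(|G|+1)$ and cut it into $|G|+1$ consecutive blocks of length $|G|$. By Lemma~\ref{action of abelian groups} each block contains a nonempty $G$-invariant subword $w_t$; write $\ell_t\le r_t$ for its left and right endpoints, and $\chi^{(p)}=\chi_{k_1}\cdots\chi_{k_p}\in G^{\ast}$ for the partial products of characters, so that invariance of $w_t$ means $\chi^{(\ell_t-1)}=\chi^{(r_t)}$. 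Since $|G^{\ast}|\le|G|$, the pigeonhole principle applied to the $|G|+1$ characters $\chi^{(r_1)},\dots,\chi^{(r_{|G|+1})}$ produces $s<t$ with $\chi^{(r_s)}=\chi^{(r_t)}$. Then $x_{k_{r_s+1}}\cdots x_{k_{r_t}}$ is $G$-invariant, and since $\chi^{(\ell_t-1)}=\chi^{(r_t)}=\chi^{(r_s)}$ it factors into the two adjacent invariant subwords $x_{k_{r_s+1}}\cdots x_{k_{\ell_t-1}}$ and $w_t=x_{k_{\ell_t}}\cdots x_{k_{r_t}}$; if the first factor is empty then $r_s=\ell_t-1$, so $w_s$ abuts $w_t$ and $w_sw_t$ is the adjacent pair. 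In either case $w$ contains a subword in $(F_+^G)^2$, which proves the claim.

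With this in hand I would feed $\xi:=|G|(|G|+1)$ and $\nu:=\nu(n(\mathfrak{R}))-1$ (note $\nu\ge 2$ since $n(\mathfrak{R})\ge 2$) into the machine of Theorem~\ref{main theorem 2}: $F_+^{\xi\nu}$ is spanned by products of $\nu$ pieces from $F(F_+^G)^2F$, each of the form $u'v'v''u''$ with $v',v''\in F_+^G$ adjacent. Regrouping exactly as in the proof of Theorem~\ref{main theorem 2} so that every internal factor becomes flanked by invariants, and applying the consequence identity \eqref{Nagata-Higman type identity for R} (which has exactly $\nu+1=\nu(n(\mathfrak{R}))$ $x$-slots), rewrites each such product as a sum of terms starting or ending with one of the $v',v''$, hence lying in $F_+^GF_++F_+F_+^G$. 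Therefore $F_+^{\xi\nu}\subseteq F_+^GF_++F_+F_+^G$, so $\gamma(F_+,F^G\otimes(F^G)^{\mathrm{op}})\le\xi\nu-1$, and the Reynolds operator \eqref{eq:reynolds} passes this to $F_+^G$, giving $\beta(F^G)\le\xi\nu-1=(\nu(n(\mathfrak{R}))-1)|G|(|G|+1)-1$, as desired.

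The step I expect to be the main obstacle is the displayed claim, and specifically the passage from \emph{an invariant subword in each block} to \emph{two adjacent invariant subwords}: a single invariant subword per block does not suffice, because the regrouping in Theorem~\ref{main theorem 2} requires the two invariants bounding each slot to abut with no intervening letters. The second pigeonhole on the partial characters $\chi^{(r_t)}$ is precisely what forces this adjacency, and it is the reason the block count must be $|G|+1$ rather than $1$, yielding the factor $|G|(|G|+1)$. A secondary point to verify carefully is the field-extension reduction, ensuring that passing to $\bar K$ with a diagonal action loses no generality.
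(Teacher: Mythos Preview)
Your proof is correct and follows essentially the same approach as the paper: reduce to a diagonal action, establish $F_+^{|G|(|G|+1)}\subseteq F(F_+^G)^2F$ via a double pigeonhole on characters, and then apply identity~\eqref{Nagata-Higman type identity for R}. The only difference is in the concluding step, where the paper applies the identity directly to a $G$-invariant monomial $w$ and uses that permutations of the letters of an invariant monomial remain invariant (abelianness once more) to conclude $w\in(F_+^G)^2$ outright, whereas you run the end of Theorem~\ref{main theorem 2} verbatim and finish with the Reynolds operator; both endings are valid.
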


\begin{proof}
Let $L$ be an extension of the base field $K$. Embedding the free algebra $K\langle V_{\infty}\rangle$ in
$L\otimes_KK\langle V_{\infty}\rangle\cong L\langle L\otimes_KV_{\infty}\rangle$, we may consider
the variety ${\mathfrak R}_L$ of $L$-algebras defined by the same polynomial identities as the variety of $K$-algebras $\mathfrak R$.
Since the base field $K$ is of characteristic 0, it is well known that
$L\otimes_KF({\mathfrak R},V)\cong F({\mathfrak R}_L,L\otimes_KV)$ is the relatively free algebra over the $L$-vector space $L\otimes_KV$
in the variety ${\mathfrak R}_L$. Similarly, embedding $GL(V)$ in $GL(L\otimes_KV)$, we obtain that
\[
F({\mathfrak R}_L,L\otimes_KV)^G=L\otimes_KF({\mathfrak R},V)^G.
\]
Hence it is sufficient to prove the theorem for $K$ algebraically closed.
Then the finite abelian group $G\subset GL(V)$ is isomorphic to a group of diagonal matrices
and we may assume that $G$ acts on the basis of $V$ as
\[
g(x_i)=\chi_i(g)x_i,\quad \chi_i\in G^{\ast},\quad i=1,\ldots,d.
\]
It follows that $F^G$ is spanned by monomials. Moreover, if $x_{i_1}\cdots x_{i_m}\in F^G$, then $x_{i_{\sigma(1)}}\cdots x_{i_{\sigma(m)}}\in F^G$
for any permutation $\sigma\in S_m$.

It is sufficient to show that any $G$-invariant monomial $w$ of degree at least $(\nu-1)\vert G\vert(\vert G+1\vert)$, $\nu=\nu(n(\mathfrak{R}))$,
can be expressed as a linear combination of  products of invariant monomials of lower degree
(as mentioned in the proof of Theorem~\ref{main theorem 2}, the assumption that $\mathfrak{R}$ properly contains the variety
of commutative algebras implies $\nu\ge 2$).
We claim that any monomial $z$ with $\deg(z)\ge \vert G\vert(\vert G\vert+1)$ belongs to $F(F_+^G)^2F$
(i.e., $z$ contains two consecutive non-trivial $G$-invariant subwords).
Indeed, since by  Lemma \ref{action of abelian groups}  any monomial of degree at least $\vert G\vert$ contains a nontrivial $G$-invariant submonomial, we have
\[z=(u_1'v_1u_1'')(u_2'v_2u_2'')\cdots (u_{\vert G\vert+1}' v_{\vert G\vert+1}u_{\vert G\vert+1}'')\]
where $v_i\in F_+^G$ are non-trivial $G$-invariant monomials and $u_i',u_i''\in F$ are arbitrary monomials.
Apply Lemma~\ref{action of abelian groups} for the words
\[u_1=v_1u_1''u_2', \ u_2=v_2u_2''u_3', \ \dots\ u_{\vert G\vert}=v_{\vert G\vert}u_{\vert G\vert}''u_{\vert G\vert+1}'.\]
We conclude that there exist $1\le i\le j\le \vert G\vert$ such that the monomial $u_iu_{i+1}\cdots u_j\in F_+^G$, hence $z$ contains the subword
$u_i\cdots u_jv_{j+1}\in (F_+^G)^2$, implying $z\in F(F_+^ G)^2F$, so the claim is proved.

Now  take a $G$-invariant monomial
$w\in F({\mathfrak R},V)^G$ of degree at least  $(\nu-1)\vert G\vert(\vert G\vert +1)$.
Write $w$ as a product
\[w=w_1\cdots w_{\nu-1}\]
of monomials $w_i$ where $\deg(w_i)\ge \vert G\vert (\vert G\vert +1)$, hence $w_i\in F(F_+^G)^2F$ for $i=1,\dots,\nu-1$.
So we have
\[w=s_1t_1't_1''s_2t_2't_2''s_3\cdots t_{\nu-1}'t_{\nu-1}''s_{\nu}\]
where $t_i',t_i''\in F_+^G$ and $s_j\in F$.
Apply the identity \eqref{Nagata-Higman type identity for R} for
\[
y:=s_1,\ x_1:=t_1',\ x_2:=t_1''s_2t_2',
\ \dots \ x_{\nu-1}:=t_{\nu-2}''s_{\nu-1}'t_{\nu-1}', \ x_{\nu}:=t_{\nu-1}'',\ z:=s_{\nu}\]
and present $w$ as a linear combination of $G$-invariant monomials $\tilde{w}$ starting or ending by a non-trivial $G$-invariant submonomial $t_i'$ or $t_i''$.
Note that If $\tilde{w}=tu$ or $ut$, where $t\in \{t_i',t_i''\vert i=1,\dots,\nu-1\}$, then $u$ is necessarily
a non-trivial $G$-invariant monomial, so $\tilde{w}\in (F_+^G)^2$, implying in turn that $w\in (F_+^G)^2$.
\end{proof}

%%%%%%%%%%%%%%%%%%%%%%%%%%%%%%%%%%%%%%%%

\end{document}